\documentclass[12pt]{article}

\usepackage{amsmath}
\usepackage{amssymb}
\usepackage{titling}
\usepackage[margin= 1in]{geometry}
\usepackage{graphicx}
\usepackage{mathtools}
\usepackage{graphicx}
\usepackage{xcolor}
\usepackage{amsthm}
\usepackage[utf8]{inputenc}
\usepackage[english]{babel}
\usepackage{dsfont}
\usepackage[all,cmtip]{xy}
\usepackage{subcaption}
\usepackage{float}
\usepackage{framed,enumitem}
\usepackage[nodisplayskipstretch]{setspace}
\usepackage{array}
\usepackage[export]{adjustbox}
\usepackage{multirow}
\usepackage{pdfpages}
\usepackage{comment}
\usepackage{hyperref}
\usepackage{soul}

\newtheorem{theorem}{Theorem}[section]

\newtheorem{lemma}[theorem]{Lemma}

\theoremstyle{definition}
\newtheorem{remark}{Remark}

\begin{document}

\title{Chiral Polyhedra from $\mathrm{AGL}(1,q)$} 
 
\author{Evan Angelone\ and\ Egon Schulte\\
Northeastern University\\
Boston, Massachusetts, USA, 02115}

\maketitle 


\begin{abstract}
We present a construction of chiral and regular polyhedra from subgroups of the general affine group $\mathrm{AGL}(1,q)$ for odd prime powers $q$. In particular, we show that the full group $\mathrm{AGL}(1,q)$ occurs as the automorphism group of a chiral polyhedron of type $\{q-1, q-1\}$ when $q \equiv 1 \bmod{4}$, or types $\{q-1,(q-1)/2\}$ or $\{(q-1)/2, q-1\}$ when $q \equiv 3 \bmod{4}$, and we compute the genus in each case. We also establish that subgroups of $\mathrm{AGL}(1,q)$ cannot serve as full automorphism groups of regular polytopes of rank $3$ or higher, nor of chiral polytopes of rank $4$ or higher, demonstrating that our construction captures all polytopes that can arise from this class of affine groups.
\bigskip\medskip

\noindent
Key Words: abstract polytopes, chiral polyhedra, automorphism groups, general affine groups
\medskip

\noindent
MSC Subject Classification (2020): Primary: 51M20. Secondary: 52B15, 20B25, 05B25 
\end{abstract}

\section{Introduction}
\label{intro}

Abstract polytopes provide a combinatorial generalization of classical geometric polytopes, with their study revealing deep connections between geometry, group theory, and combinatorics. Among these structures, chiral polytopes occupy a particularly intriguing position: they exhibit full rotational symmetry but lack any reflectional symmetries. The automorphism group of a chiral polytope has two orbits on flags, with adjacent flags always lying in different orbits.

The construction and classification of chiral polytopes has attracted considerable attention since their formal introduction by Schulte and Weiss~\cite{SchWei1991,SchWei1994}, and has recently resulted in the publication of Pellicer's comprehensive ``Abstract Chiral Polytopes" book~\cite{Pel2025}. While regular polytopes have been extensively studied and many construction methods are known (see McMullen \& Schulte~\cite{McMSch2002}), chiral polytopes have remained more elusive than their regular counterparts, despite significant progress. For quite some time, finite examples were known only in ranks 3 and 4. Finite examples of rank 5 were first described in Conder, Hubard \& Pisanski~\cite{CHP2008}, and the existence of chiral polytopes of arbitrarily large rank was finally confirmed in Pellicer~\cite{Pel2010}. Extensions of polytopes have helped in constructing new finite examples of chiral polytopes. Cunningham and Pellicer~\cite{CunPel2014} established that every finite chiral $d$-polytope (necessarily with regular facets) is itself the facet of a finite chiral $(d+1)$-polytope.

There has also been significant activity in studying chiral polytopes with specific types of groups, such as simple or almost simple groups or solvable groups. For example, Conder, Hubard and O'Reilly Regueiro~\cite{CHO2024} recently showed that for all but finitely many $n$, the symmetric group $S_n$ and alternating group $A_n$ occur as automorphism groups of chiral polytopes of rank 5; this extended earlier work by the same authors, together with Pellicer, for chiral 4-polytopes~\cite{CHPO2015} (see also Zhang~\cite{Zha2023a}). Leeman and Liebeck~\cite{LeeLie2017} established that, remarkably, every finite simple group is the automorphism group of a chiral polyhedron (3-polytope), with few exceptions. Chiral polytopes with automorphism groups isomorphic (or related) to $\mathrm{PSL}(2,q)$, $\mathrm{PSL}(3,q)$ or $\mathrm{PSU}(3,q)$, have also been investigated; for example, in \cite{BreCat2021,CPS2008,LMO2017,LeeVand2023}. In Conder, Feng \& Hou~\cite{CFH2021} as well as Zhang~\cite{Zha2023b}, constructions of chiral polytopes with solvable automorphism groups are presented. Recent work also studies chiral polytopes with preassigned group orders such as $2^n$ or $2^np^k$, with $p$ an odd prime \cite{Cun2018,HZG2024}, mirroring similar efforts for regular polytopes in, for example, \cite{HFL2020,HFL2025,SchWei2006}.

In this paper, we present a construction of chiral polyhedra using the affine general linear group $\mathrm{AGL}(1, q)$ for odd prime powers $q \neq 3$. It appears that the group $\mathrm{AGL}(1, q)$ is well-suited for generating chiral polyhedra, and we describe all possible ways to derive such polyhedra. Our main contributions are as follows. In Theorem \ref{gamprops}, we provide a general construction for chiral or regular polyhedra from suitable pairs of elements in $\mathrm{AGL}(1, q)$, characterizing when the resulting automorphism group is a semidirect product involving the translation subgroup. Theorem~\ref{gampropsagl} establishes that the full group $\mathrm{AGL}(1, q)$ itself does occur as the automorphism group of a chiral polyhedron, with explicit determination of the type and genus. In particular, when $q\equiv 1 \bmod{4}$, we obtain chiral polyhedra of Schl\"afli type $\{q-1, q-1\}$ and genus $1 + q(q-5)/4$; and when $q\equiv 3 \bmod{4}$, of types $\{(q-1)/2, q-1\}$ or $\{(q-1)/2, q-1\}$ and genus $1 + q(q-7)/4$.

We also prove several results delimiting the scope of our construction. Theorem~\ref{propsubgroup} shows that there are instances where the polyhedra of Theorem~\ref{gamprops} are regular, not chiral, with their automorphism group given by a subgroup of the automorphism group $\mathrm{A\Gamma L}(1, q)$ of $\mathrm{AGL}(1, q)$. We also show that for any divisor $n$ of $q-1$ (with $n > 2$), there exist chiral or regular polyhedra whose combinatorial rotation subgroups lie in $\mathrm{AGL}(1,q)$ and are isomorphic to semidirect products of subgroups of $\mathbb{F}_q$ by cyclic groups $C_n$ (in the chiral case this is the full automorphism group). Conversely, Theorem \ref{noregs} demonstrates that subgroups of $\mathrm{AGL}(1, q)$ cannot yield regular polytopes of any rank $\geq 3$, nor chiral polytopes of rank $\geq 4$, establishing that our construction is essentially optimal for this class of groups. 

The remainder of the paper is organized as follows. Section \ref{abpo} reviews the necessary background on abstract polytopes, focusing on the characterization of regular and chiral polyhedra through their automorphism groups. Section \ref{groupagl} examines the structure of $\mathrm{AGL}(1, q)$, particularly the orders of elements and properties crucial for our construction. Section \ref{constr} contains our main results, presenting the construction method and proving the existence of the resulting chiral polyhedra.

\section{Abstract polyhedra}
\label{abpo}

We briefly review some basic notions and results about chiral and regular (abstract) polytopes following~\cite{McMSch2002,Pel2025,SchWei1991}. Our main interest is in polytopes of rank $3$, also called (abstract) polyhedra. Topologically, (locally finite) abstract polyhedra are maps on surfaces.
\smallskip
 
An ({\em abstract\/}) {\em $n$-polytope\/} is a partially ordered set $\mathcal{P}$ of rank $n$. There are elements of each rank $j=-1,0,\ldots,n$. The elements of rank $j$ are called the {\em $j$-faces\/} of~$\mathcal{P}$, or the {\em vertices}, {\em edges\/}, or {\em facets\/}, if $j = 0$, $1$, or $n-1$, respectively. An $n$-polytope $\mathcal{P}$ has two \textit{improper} faces: a minimum face $F_{-1}$ (of rank $-1$) and a maximum face~$F_n$ (of rank $n$). If $n=3$ then $\mathcal{P}$ is called a {\em polyhedron\/}. In this case, if there is little possibility of confusion, we use the term ``face'' to mean ``$2$-face''. Thus a polyhedron consists of vertices, edges, and faces, as well as two improper faces.

The \textit{flags} (maximal totally ordered subsets) of an $n$-polytope $\mathcal{P}$ all contain a face of each rank $j$, including the improper faces (which often are suppressed in listing the elements of a flag). An $n$-polytope $\mathcal{P}$ is \textit{strongly flag-connected\/}; that is, given any two flags $\Phi$ and $\Psi$ of $\mathcal{P}$ there exists a sequence of flags $\Phi = \Phi_{0},\Phi_{1},\ldots,\Phi_{k} =\Psi$, such that successive flags $\Phi_{i-1}$ and $\Phi_{i}$ are {\em adjacent\/} (differ by one face), and $\Phi \cap\Psi \subseteq \Phi_{i}$ for each $i$. Moreover, $\mathcal{P}$ has what is called the diamond property:\ if $F$ and $G$ are a $(j-1)$-face and a $(j+1)$-face with $F < G$ and $0\leq j\leq n-1$, then there are exactly two $j$-faces $H$ such that $F < H < G$. The diamond property implies that, for each $j=0,\ldots,n-1$, each flag $\Phi$ is adjacent to just one other flag differing from $\Phi$ in the $j$-face; this is called the \textit{$j$-adjacent\/} flag of $\Phi$, denoted $\Phi^j$.
\smallskip

An $n$-polytope $\mathcal{P}$ is called {\em regular\/} if its ({\em automorphism\/}) {\em group\/} $\Gamma(\mathcal{P})$ is transitive on the flags of $\mathcal{P}$. If $\mathcal{P}$ is a regular $n$-polytope and $\Phi$ a fixed, {\em base\/}, flag of $\mathcal{P}$, then $\Gamma(\mathcal{P})$ is generated by involutory generators $\rho_{0},\dots,\rho_{n-1}$, where $\rho_{j}$ is the unique automorphism which fixes all faces of $\Phi$ but the $j$-face (and thus maps $\Phi$ to its $j$-adjacent flag $\Phi^j$). For a regular polyhedron there are just three generators, $\rho_{0},\rho_{1},\rho_{2}$. \smallskip

An abstract $n$-polytope $\mathcal{P}$ is {\em chiral\/} if $\Gamma(\mathcal{P})$ has two orbits on the flags, such that adjacent flags are in distinct orbits. If $\mathcal{P}$ is a chiral $n$-polytope and again $\Phi$ is a {\em base\/} flag, then $\Gamma(\mathcal{P})$ is generated by elements $\sigma_1,\ldots,\sigma_{n-1}$ where $\sigma_j$ maps $\Phi$ to $(\Phi^{j-1})^{j}$, the $j$-adjacent flag of the $(j-1)$-adjacent flag $\Phi^{j-1}$ of $\Phi$. Note that the products
\[\sigma_{i}\cdot\ldots\cdot\sigma_{j}\;\; (1\leq i<j\leq n-1)\]
all are involutions, but the generators $\sigma_i$ themselves are not. For a chiral polyhedron $\mathcal{P}$ there are just two generators, $\sigma_{1},\sigma_{2}$, with $(\sigma_1\sigma_2)^2=1$. In this case, if the faces and vertex-figures are finite, then $\sigma_1$ fixes the base face in $\Phi$ and cyclically permutes its vertices, and $\sigma_2$ fixes the base vertex and cyclically permutes the edges emanating from it; the involution $\sigma_1\sigma_2$ fixes the base edge and interchanges its vertices as well as the two faces meeting at the base edge.
\smallskip

The automorphism groups of regular or chiral polytopes also satisfy an important intersection property involving the distinguished generators. Conversely, if a group $\Gamma$ has a suitable set of generators $\rho_0,\ldots,\rho_{n-1}$ or $\sigma_1,\ldots,\sigma_{n-1}$ satisfying this intersection property, then $\Gamma$ is the automorphism group of a regular or chiral $n$-polytope. 

In the case of chiral polyhedra the intersection property takes the simple form 
\[\langle\sigma_1\rangle \cap \langle\sigma_2\rangle = \{1\}.\]
For a regular polyhedron with generators $\rho_0,\rho_1,\rho_2$, the elements $\sigma_1\coloneqq \rho_0\rho_1$ and $\sigma_2\coloneqq  \rho_1\rho_2$ generate the (combinatorial) {\it rotation subgroup\/} $\Gamma^+(\mathcal{P})$ of the automorphism group $\Gamma(\mathcal{P})$ and share many properties with the generators in the chiral case including the intersection property.
Conversely, if a group $\Gamma$ with two generators $\sigma_1,\sigma_{2}$ satisfies the relation $(\sigma_1\sigma_2)^2=1$ and has the intersection property, then $\Gamma$ is the automorphism group of a chiral polyhedron or the rotation subgroup of the automorphism group of a regular polyhedron. This polyhedron is chiral if and only there is no involutory group automorphism $\rho$ of $\Gamma(\mathcal{P})$ such that $\rho(\sigma_2)=\sigma_{2}^{-1}$ and $\rho(\sigma_1)=\sigma_{1}\sigma_2^2$, or equivalently, $\rho(\sigma_2)=\sigma_{2}^{-1}$ and $\rho(\tau)=\tau$. (For regular polyhedra, conjugation by $\rho_2$ would provide such a group automorphism.)  

For a regular or chiral polyhedron, all faces have the same number of vertices and all vertices lie in the same number of edges. If all faces of a polyhedron $\mathcal{P}$ are $s$-gons for some~$s$ and all vertices have valency $t$, then $\mathcal{P}$ is said to be of {\em type\/} $\{s,t\}$. Chiral polyhedra are maps on orientable surfaces, whereas the surfaces for regular polyhedra can be orientable or non-orientable.
\smallskip

\section{The group \texorpdfstring{$\mathrm{AGL}(1,q)$}{AGL(1, q)}}
\label{groupagl}

Let $p$ be an odd prime, $q$ be a power of $p$, and $q=p^l$. Let $q-1=2^{e}q'$ where $e\geq 1$ and $q'$ is odd, and set $r\coloneqq (q-1)/2=2^{e-1}q'$. If $q\equiv 1 \bmod 4$ then necessarily $e\geq 2$ and $r$ is even; and if $q\equiv 1 \bmod 4$ then $e=1$ and $r=q'$ is odd. 

Recall that $\mathbb{F}_q^*$ is a cyclic group of order $q-1$. For an element $g$ of a group let $o(g)$ denote its order. In particular, if $\mathbb{F}_q^*=\langle\gamma\rangle$ then 
\[o(\gamma^m)=(q-1)/\gcd(q-1,m)\] 
for each positive $m$.

We let $\rm{Aut}(\mathbb{F}_q)$ denote the group of field automorphisms of $\mathbb{F}_q$. Then $\rm{Aut}(\mathbb{F}_q)$ is a cyclic group of order $l$ generated by the Frobenius automorphism $\varphi: x\rightarrow x^p$ (see~\cite{Lang1993}). Note that, as a cyclic group of order $l$, $\rm{Aut}(\mathbb{F}_q)$ contains an involution if and only if $l$ is even. In this case, the involutory automorphism of $\mathbb{F}_q$ is given by $\varphi^{l/2}: x\rightarrow x^{p^{l/2}}\!\!=:\overline{x}$. Note that $\overline{\overline{x}}=x$ for all $x\in\mathbb{F}_q$. 

Throughout, let $\mathrm{AGL}(1,q)$ denote the affine group acting on the 1-dimensional affine space over the finite field $\mathbb{F}_q$ (the affine line with $q$ points identified with $\mathbb{F}_q$ itself). This group has order $q(q-1)$ and consists of all affine transformations $\alpha(a,b): \mathbb{F}_q \rightarrow \mathbb{F}_q$ defined by
\[ \alpha(a,b)(x) \coloneqq  ax+b,\]
for $a\in\mathbb{F}_q^*$, $b\in\mathbb{F}_q$. Note that 
\[ \alpha(a,b)\, \alpha(a',b') = \alpha(aa',ab'+b).\]
For $b\in\mathbb{F}_q$, the element $\alpha(1,b)$ of $\mathrm{AGL}(1,q)$ is called the \textit{translation by} $b$. These $q$ translations are the elements of the {\em translation subgroup\/}
\[ T\coloneqq \{\alpha(1,b)\,|\,b\in \mathbb{F}_q\} \]
of $\mathrm{AGL}(1,q)$, and have order $p$ if nontrivial. Note that $T$ is an elementary abelian normal subgroup of $\mathrm{AGL}(1,q)$ isomorphic to $\mathbb{F}_q$. It is well-known that 
\[ \mathrm{AGL}(1,q) \cong T\rtimes \mathrm{GL}(1,q)\cong \mathbb{F}_q \rtimes \mathbb{F}_q^* .\]
\smallskip 

\noindent
{\bf Remark.}\,\
{\em Alternatively, $\mathrm{AGL}(1,q)$ could be defined as the group of $2\times 2$ matrices $A$ over $\mathbb{F}_q$ of the form
\[ \left[\begin{matrix}
a&b\\
0&1
\end{matrix} \right]\]
for $a,b\in\mathbb{F}_q$, $a\neq 0$. Under the action of this group on $\mathbb{F}_q^2$, the affine line $y=1$ of $\mathbb{F}_q^2$ is invariant and 
\[A\cdot [x,1]^t = [ax+b,1]^t = [\alpha(a,b)(x),1],\]
where $t$ indicates the transpose. However, we prefer to work directly with the action of $\mathrm{AGL}(1,q)$ on the affine line $\mathbb{F}_q$, as described above.}
\medskip

The transformations in $\mathrm{AGL}(1,q)$ have the following properties.

\begin{lemma}
\label{aglprops}
Let $a\in\mathbb{F}_q^*$, $b\in\mathbb{F}_q$. Then,\\[.03in]
(a)\ $\alpha(a,b)^{-1} = \alpha(a^{-1},-a^{-1}b)$.\\[.03in]
(b)\ $\alpha(1,b)^k = \alpha(1,kb)$ for $k\geq 0$.\\[.03in]
(c)\ $\alpha(a,b)^k = \alpha(a^k, \frac{a^k-1}{a-1}b)$ for $a\neq 1$, $k\geq 0$.\\[.03in]
(d)\ The order of $\alpha(a,b)$, $a\neq 1$, in $\mathrm{AGL}(1,q)$ coincides with the order of $a$ in $\mathbb{F}_q^*$. In particular, $\alpha(a,b)$ with $a\neq 1$ has order $q-1$ if and only if $\mathbb{F}_q^*=\langle a\rangle$.\\[.03in]
(e) $\alpha(a,b)$ is an involution if and only if $a=-1$.
\end{lemma}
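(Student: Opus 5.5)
All five parts follow from the composition rule $\alpha(a,b)\,\alpha(a',b')=\alpha(aa',ab'+b)$, so the plan is to derive them in order and use each one in the next.

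For (a), we need $\alpha(a,b)\,\alpha(a',b')=\alpha(1,0)$, which forces $aa'=1$ and $ab'+b=0$. Hence $a'=a^{-1}$ and $b'=-a^{-1}b$, and the composition rule confirms this choice works on both sides.

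For (b), induct on $k$ using $\alpha(1,b)\,\alpha(1,kb)=\alpha(1,(k+1)b)$. The case $k=0$ is the identity $\alpha(1,0)$.

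For (c), induct on $k$ with the claim $\alpha(a,b)^k=\alpha(a^k,s_k b)$, where $s_k=1+a+\cdots+a^{k-1}$. The step is
\[
\alpha(a,b)\,\alpha(a^k,s_kb)=\alpha(a^{k+1},(a s_k+1)b),
\]
and $a s_k+1=s_{k+1}$. Since $a\neq 1$, the geometric sum gives $s_k=(a^k-1)/(a-1)$. The case $k=0$ holds because $s_0=0$.

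For (d), the main point is that when $a\neq 1$ the second coordinate in (c) vanishes exactly when the first coordinate is $1$. Indeed, $\alpha(a,b)^k=1$ means $a^k=1$ and $\frac{a^k-1}{a-1}b=0$. The first condition already forces the second, because $a-1\neq 0$ in the field. So $\alpha(a,b)^k=1$ if and only if $a^k=1$, and the order of $\alpha(a,b)$ equals $o(a)$. The "in particular" clause follows because $\mathbb{F}_q^*$ is cyclic of order $q-1$, so $o(a)=q-1$ exactly when $a$ generates $\mathbb{F}_q^*$.

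For (e), I split into two cases.
\begin{itemize}
\item If $a=1$ and $b\neq 0$, then by (b) the element $\alpha(1,b)$ is a translation of order $p$. Since $p$ is odd, it is not an involution, and $\alpha(1,0)$ is the identity.
\item If $a\neq 1$, then by (d) the order of $\alpha(a,b)$ is $2$ exactly when $o(a)=2$, that is, $a=-1$. Here $-1\neq 1$ because $q$ is odd.
\end{itemize}
Both directions of (e) follow.

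The only step needing care is (d), and specifically the fact that $a^k=1$ already kills the translation part $\frac{a^k-1}{a-1}b$. The rest is routine computation.
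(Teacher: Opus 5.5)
Your proof is correct and follows the paper's route: both rest on the explicit power formula $\alpha(a,b)^k=\alpha\bigl(a^k,\tfrac{a^k-1}{a-1}b\bigr)$ and on the observation that, for $a\neq 1$, $\alpha(a,b)^k=1$ if and only if $a^k=1$, with (e) being the case $k=2$. You obtain the formula by induction on the composition rule, whereas the paper unrolls the composition directly, and your separate treatment of $a=1$ in (e) via the order-$p$ translations makes explicit a case that the paper's one-line argument leaves implicit.
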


\begin{proof}
These properties are straightforward. Parts (c), (d) and (e) follow immediately from 
\[\alpha(a,b)^k(x) = a(a^{k-1}x+a^{k-2}b+\ldots+ab +b) +b = a^{k}x+ \tfrac{a^k-1}{a-1}b.\]
In particular, $\alpha(a,b)^k = 1$ in $\mathrm{AGL}(1,q)$ if and only if $a^k=1$ in $\mathbb{F}_q^*$. The second part of (d), as well as part (e), correspond to the cases $k=q-1$ or $k=2$, respectively.
\end{proof}
\smallskip

The product of two involutions is a translation; in fact, 
$\alpha(-1,b)\alpha(-1,c)=\alpha(1,b-c)$. This has order $p$ if $b\neq c$ or is trivial if $b=c$. Similarly, 
the product of a translation and an involution, in either order, is an involution. As a consequence, $T$ is a subgroup of index 2 in 
\begin{equation}
\label{invgen}
H\,\coloneqq \,\langle \alpha(-1,c)\,|\, c\in \mathbb{F}_q\rangle
\,=\,\{\alpha(1,b),\alpha(-1,c)\,|\, b,c\in \mathbb{F}_q\} ,
\end{equation}
which also is a normal subgroup of $\mathrm{AGL}(1,q)$. Note that $H$ is isomorphic to $T\rtimes C_2$, where any involution $\alpha(-1,c)$ can serve as the generator for $C_2$. 
\medskip

The automorphism group of $\mathrm{AGL}(1,q)$ is known to be $\mathrm{A\Gamma L}(1,q)$, the group consisting of all ``semi-affine" transformations $\gamma(a,b,\sigma): \mathbb{F}_q \rightarrow \mathbb{F}_q$ defined by
\[ \gamma(a,b)(x) \coloneqq  a x^{\sigma}+b,\]
for $a\in\mathbb{F}_q^*$, $b\in\mathbb{F}_q$, $\sigma\in\rm{Aut}(\mathbb{F}_q)$, where $x^{\sigma}:=\sigma(x)$ (see \cite[p.~98]{AK1992}). Note that $\mathrm{A\Gamma L}(1,q)$ is generated by the group of semi-linear transformations on $\mathbb{F}_q$ and the translation subgroup $T$ of $\mathrm{AGL}(1,q)$. It coincides with $\mathrm{AGL}(1,q)$ if $q$ is a prime, but is strictly larger than $\mathrm{AGL}(1,q)$ if $q$ is not a prime. 
\smallskip

Later, we require the following simple lemma about involutory group automorphisms of $\mathrm{AGL}(1,q)$. 

\begin{lemma}
\label{invgrauts}
Let $\rho:=\gamma(a,b,\sigma)$ be a group automorphism of $\mathrm{AGL}(1,q)$, where $a\in\mathbb{F}_q^*$, $b\in\mathbb{F}_q$, and $\sigma\in\rm{Aut}(\mathbb{F}_q)$ is not the identity. If $\rho$ is an involution, then $\sigma$ is also an involution (thus $l$ is even and $\sigma=\varphi^{l/2}$)
and $aa^{\sigma} = 1$ and $ab^\sigma +b=0$. \end{lemma}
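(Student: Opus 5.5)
We compute $\rho^2$ directly as a map on $\mathbb{F}_q$ and compare coefficients. For any $x\in\mathbb{F}_q$,
\[
\rho^2(x)=a\bigl(ax^{\sigma}+b\bigr)^{\sigma}+b = a a^{\sigma}\, x^{\sigma^2} + a b^{\sigma} + b ,
\]
where we used that $\sigma$ is a field automorphism, so $(ax^\sigma+b)^\sigma=a^\sigma x^{\sigma^2}+b^\sigma$. Since $\rho$ is an involution, $\rho^2$ is the identity map.

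First we evaluate at $x=0$. This gives $ab^{\sigma}+b=0$, which is the third condition. The map $\rho^2$ then reduces to $x\mapsto aa^{\sigma}x^{\sigma^2}$, and this must equal $x$ for all $x$.

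Next we evaluate at $x=1$. Since $1^{\sigma^2}=1$, we get $aa^{\sigma}=1$, the second condition. It follows that $x^{\sigma^2}=x$ for all $x\in\mathbb{F}_q$, so $\sigma^2$ is the identity in $\mathrm{Aut}(\mathbb{F}_q)$. By hypothesis $\sigma\neq 1$, so $\sigma$ has order exactly $2$.

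Finally, $\mathrm{Aut}(\mathbb{F}_q)=\langle\varphi\rangle$ is cyclic of order $l$. As noted in Section~\ref{groupagl}, a cyclic group of order $l$ contains an involution only when $l$ is even, and the involution is then unique, namely $\varphi^{l/2}$. Hence $l$ is even and $\sigma=\varphi^{l/2}$.

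The only point that needs care is the identification of the group automorphism $\rho$ with the semi-affine map $\gamma(a,b,\sigma)$. Composition of automorphisms must correspond to composition of maps, which holds because $\mathrm{A\Gamma L}(1,q)$ acts on $\mathrm{AGL}(1,q)$ by conjugation inside $\mathrm{Sym}(\mathbb{F}_q)$. This correspondence is faithful, since the centralizer of $\mathrm{AGL}(1,q)$ in $\mathrm{Sym}(\mathbb{F}_q)$ is trivial (for $q>2$). So $\rho^2=1$ as a group automorphism is equivalent to $\gamma(a,b,\sigma)^2$ being the identity map on $\mathbb{F}_q$, which is what the computation above uses. With this in place the remaining steps are routine.
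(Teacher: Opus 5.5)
Your proof is correct and follows the paper's argument: expand $\rho^2(x)=aa^{\sigma}x^{\sigma^2}+ab^{\sigma}+b$ and evaluate at $x=0$ and $x=1$. Your extra remark makes explicit a point the paper leaves implicit: $\mathrm{A\Gamma L}(1,q)$ acts faithfully on $\mathrm{AGL}(1,q)$ by conjugation, since $\mathrm{AGL}(1,q)$ has trivial centralizer in $\mathrm{Sym}(\mathbb{F}_q)$, so $\rho^2=1$ as a group automorphism means $\gamma(a,b,\sigma)^2$ is the identity map.
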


\begin{proof}
Evaluate the two sides of the equation
\[x = \rho^2(x) = a(a x^{\sigma}+b)^\sigma +b = a(a^\sigma x^{\sigma^2}+b^\sigma) + b
=aa^\sigma x^{\sigma^2}+ab^\sigma +b,\]
which holds for all $x\in\mathbb{F}_q$, at $x=0$ and $x=1$ respectively. This immediately gives $ab^\sigma +b=0$ and $aa^\sigma=1$, and thus $x^{\sigma^2}=x$ for all $x\in\mathbb{F}_q$. It follows that $\sigma$ is also an involution.
\end{proof}
\bigskip

We proceed with some technical lemmas that are later needed to determine the orders of certain elements in $\mathbb{F}_q^*$.

\begin{lemma}
\label{div1}
Let $u,v$ be positive integers, and let $u$ be odd. Then 
$\gcd(2u,u+v) = 2\,\gcd(2u,v)$ if $v$ is odd, and
$\gcd(2u,u+v)=\gcd(2u,v)/2$ if $v$ is even. 
\end{lemma}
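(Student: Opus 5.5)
The plan is to reduce both sides to the single quantity $d\coloneqq \gcd(u,v)$ and to keep track of the prime $2$ separately. Because $u$ is odd, $2u$ is $2$ times an odd number. So for any integer $w$ we have $\gcd(2u,w)=\gcd(u,w)$ when $w$ is odd, and $\gcd(2u,w)=2\gcd(u,w/2)=2\gcd(u,w)$ when $w$ is even. The last equality holds because dividing $w$ by $2$ does not change its common divisors with the odd number $u$. Together with the elementary identity $\gcd(u,u+v)=\gcd(u,v)=d$, this should settle everything.

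\emph{Case $v$ odd.} Here $u+v$ is even, since both summands are odd. Applying the observation above with $w=u+v$ gives $\gcd(2u,u+v)=2\gcd(u,u+v)=2d$. Applying it with $w=v$, which is odd, gives $\gcd(2u,v)=\gcd(u,v)=d$. Hence $\gcd(2u,u+v)=2\gcd(2u,v)$.

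\emph{Case $v$ even.} Here $u+v$ is odd. Then $\gcd(2u,u+v)=\gcd(u,u+v)=d$. On the other hand $\gcd(2u,v)=2\gcd(u,v)=2d$, since $v$ is even. Hence $\gcd(2u,u+v)=\gcd(2u,v)/2$.

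There is no real obstacle. The only point needing care is the claim $\gcd(2u,w)=2\gcd(u,w)$ for even $w$, which uses the oddness of $u$. I would justify it by noting that the $2$-part of $\gcd(2u,w)$ is exactly $2$, because $2\mid w$ while $4\nmid 2u$, and that its odd part is $\gcd(u,w)$.
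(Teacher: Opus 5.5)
Your proof is correct and follows essentially the same route as the paper. Both arguments reduce everything to $\gcd(u,v)=\gcd(u,u+v)$ and use the parity of $v$ (hence of $u+v$) to decide whether $\gcd(2u,\cdot)$ picks up an extra factor $2$ relative to $\gcd(u,\cdot)$; you additionally justify $\gcd(2u,w)=2\gcd(u,w)$ for even $w$, which the paper takes as evident.
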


\begin{proof}
Note that $u$ is odd. If $v$ is also odd, then clearly 
\[\gcd(2u,v) = \gcd(u,v) = \gcd(u,u+v) = \gcd(2u,u+v)/2.\]
Similarly, if $v$ is even, then 
\[\gcd(2u,v) = 2\,\gcd(u,v) = 2\,\gcd(u,u+v) = 2\,\gcd(2u,u+v).\] 
\end{proof}
\smallskip

For a positive integer $m$, we let $t(m)$ denote the largest integer $l$ such that $2^l\,|\,m$. 

\begin{lemma}
\label{gcdlem}
Let $q$ be an odd prime power, let $q-1=2^{e}q'=2r$ with $e\geq 1$ and $q'$ odd, and let $1\leq m\leq q-1$. Then, 
\[
	\gcd(q - 1, r + m) = \begin{cases}
		\gcd(q - 1, m), & \text{if $e \geq 2$ and $t(m) \leq e - 2$}; \\ 
		2\gcd(q - 1, m), & \text{if $t(m) = e - 1$}, \\ 
		\gcd(q - 1, m)/2, & \text{if $t(m) \geq e$}.
	\end{cases}
\]
\end{lemma}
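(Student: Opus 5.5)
We compare the $2$-parts and the odd parts of the two gcds separately, writing $q-1=2^e q'$ and $r=2^{e-1}q'$. Since $q'$ is odd, for any integer $n$ we have
\[
\gcd(q-1,n)=2^{\min(e,t(n))}\,\gcd(q',n).
\]
It therefore suffices to show two things. First, $\gcd(q',r+m)=\gcd(q',m)$. Second, the exponent $\min(e,t(r+m))$ equals $\min(e,t(m))$, $\min(e,t(m))+1$, or $\min(e,t(m))-1$ in the three respective cases.

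The odd part is immediate. Because $q'\mid r$, any common divisor of $q'$ and $m$ also divides $r+m$, and conversely. So the odd parts agree in every case.

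For the $2$-part we compare $t(r)=e-1$ with $t(m)$ and use the standard fact that $t(x+y)=\min(t(x),t(y))$ when $t(x)\ne t(y)$, and $t(x+y)>t(x)$ when $t(x)=t(y)$.

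\emph{Case $t(m)\le e-2$.} Here $t(m)<t(r)$, so $t(r+m)=t(m)$. Both exponents equal $t(m)$ (which is less than $e$), and the gcds coincide. This case requires $e\ge 2$, matching the hypothesis in the statement.

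\emph{Case $t(m)=e-1$.} Then $t(r+m)\ge e$, so $\min(e,t(r+m))=e$. Meanwhile $\min(e,t(m))=e-1$, which gives the factor $2$.

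\emph{Case $t(m)\ge e$.} Then $t(r)=e-1<t(m)$, so $t(r+m)=e-1$. Now $\min(e,t(m))=e$ while the new exponent is $e-1$, which gives the factor $1/2$. In particular, $\gcd(q-1,m)$ is even here, so the division makes sense.

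One could alternatively try to invoke Lemma~\ref{div1} with $u=q'$. That lemma only directly covers $e=1$, however, so the $2$-adic valuation argument above is cleaner and handles all $e$ uniformly. The only point needing care is the equal-valuation case $t(m)=e-1$. There we need only the lower bound $t(r+m)\ge e$, not its exact value, since the exponent is capped at $e$ by $q-1$.
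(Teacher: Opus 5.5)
Your proof is correct, and it organizes the computation differently from the paper. The paper compares divisor sets. Divisors of $q-1$ that divide $r$ divide $m$ if and only if they divide $r+m$, and the only other divisors of $q-1$ are those of the form $2^e s$ with $s\mid q'$. This settles the case $t(m)\le e-2$ directly. For $t(m)\ge e-1$, the paper divides $q-1$, $m$ and $r+m$ by $2^{e-1}$ and applies Lemma~\ref{div1} with $u=q'$ and $v=m/2^{e-1}$. So your remark that Lemma~\ref{div1} only covers $e=1$ directly undersells it: after this rescaling it handles every $e$, and that is exactly how the paper uses it.

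Your factorization $\gcd(q-1,n)=2^{\min(e,t(n))}\gcd(q',n)$, combined with the ultrametric behaviour of $t$, treats all three cases uniformly without the auxiliary lemma. It also makes explicit a point the paper uses only tacitly in its first case: $t(r+m)=t(m)\le e-2$, so no divisor $2^e s$ can divide $r+m$. The paper instead packages the $2$-adic step into Lemma~\ref{div1}, whose proof is the same odd/even split for a modulus with $2$-exponent $1$. This keeps everything at the level of elementary gcd identities. Both arguments rest on the same two facts: $q'\mid r$ fixes the odd part, and comparing $t(m)$ with $t(r)=e-1$ determines the shift in the $2$-part.
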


\begin{proof}
Our arguments below will depend on $e$ and $t(m)$. Throughout, we only need to consider divisors $d$ of $q-1$. Clearly, if $d$ is also a divisor of $r$, then $d\,|\,q-1,m$ if and only if $d\,|\,q-1, r+m$. Thus the pairs $q-1,m$ and $q-1,r+m$ share the same set of divisors when restricted to divisors of $r$. The only divisors of $q-1$ not dividing $r$ are those of the form $d=2^e s$ with $s\,|\,q'$. 

First suppose that $e\geq 2$ and $t(m)\leq e-2$. Then necessarily $q\equiv 1 \bmod 4$. We already know that the pairs $q-1,m$ and $q-1,r+m$ have the same divisors when restricted to divisors of $r$. But now $t(m)\leq e-2$, so each divisor of $q-1$ that is also a divisor of $m$ or $r+m$ is a divisor of $r$ and thus $\gcd(q-1,m)=\gcd(q-1,r+m)$, as desired.

Let $e\geq 1$ and $t(m)\geq e-1$. Then also $t(r+m)\geq e-1$. Therefore $q-1$, $m$, $r+m$ are all divisible by $2^{e-1}$, and so are their pairwise greatest common divisors. Set $u\coloneqq r/2^{e-1}=(q-1)/2^{e}=q'$ and $v\coloneqq m/2^{e-1}$. Then $u$ is odd; and $v$ is odd if $t(m)=e-1$, but $v$ is even if $t(m)\geq e$. Now it follows from Lemma~\ref{div1} that for $t(m)=e-1$,
\[\gcd(q-1,r+m)=2^{e-1}\,\gcd(2u,u+v)= 2^{e}\,\gcd(2u,v)=2\,\gcd(q-1,m),\]
and for $t(m)\geq e$,
\[\gcd(q-1,m)=2^{e-1}\,\gcd(2u,v)=2^e\,\gcd(2u,u+v)=2\,\gcd(q-1,r+m).\] 
\end{proof}
\smallskip

The previous lemma allows us to compare the orders of certain elements in $\mathbb{F}_q^*$.

\begin{lemma}
\label{neggam}
Let $\mathbb{F}_q^*=\langle\gamma\rangle$ and $1\leq m\leq q-1$. Then the orders of $\gamma^m$ and $-\gamma^{-m}$ in $\mathbb{F}_q^*$ are related as follows: 
\[
	o(-\gamma^{-m}) = \begin{cases}
		o(\gamma^m), & \text{if $e \geq 2$ and $t(m) \leq e - 2$}; \\ 
		o(\gamma^m)/2, & \text{if $t(m) = e - 1$}; \\ 
		2\, o(\gamma^m), & \text{if $t(m) \geq e$}.
	\end{cases}
\]
\end{lemma}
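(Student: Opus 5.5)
The plan is to write $-\gamma^{-m}$ as a single power of the generator $\gamma$ and then read off the orders from the formula $o(\gamma^k)=(q-1)/\gcd(q-1,k)$ and Lemma~\ref{gcdlem}. Since $\mathbb{F}_q^*=\langle\gamma\rangle$ is cyclic of even order $q-1=2r$, its unique involution is $-1$, so $-1=\gamma^{r}$. Hence
\[
-\gamma^{-m}=\gamma^{r-m}=\gamma^{-(m-r)} .
\]
An element and its inverse have the same order, so $o(-\gamma^{-m})=o(\gamma^{m-r})$. Moreover $m-r\equiv m+r \pmod{q-1}$, because $2r=q-1$. The exponent $r+m$ is positive, so the stated order formula gives
\[
o(-\gamma^{-m})=o(\gamma^{r+m})=\frac{q-1}{\gcd(q-1,r+m)} .
\]

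Next I compare this with $o(\gamma^m)=(q-1)/\gcd(q-1,m)$ using the three cases of Lemma~\ref{gcdlem}.
\begin{itemize}
\item If $e\geq2$ and $t(m)\leq e-2$, the two gcds are equal, so the orders coincide.
\item If $t(m)=e-1$, then $\gcd(q-1,r+m)=2\gcd(q-1,m)$, so $o(-\gamma^{-m})=o(\gamma^m)/2$.
\item If $t(m)\geq e$, then $\gcd(q-1,r+m)=\gcd(q-1,m)/2$, so the order doubles.
\end{itemize}

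These cases are exhaustive. When $e=1$ the first case is empty, and every $m$ has $t(m)\geq 0=e-1$, so it falls under the second or third case.

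The only step needing care is the identification $-1=\gamma^r$, which uses that a cyclic group of even order has exactly one involution. Otherwise the argument is bookkeeping. The edge case $m=q-1$ also causes no trouble: then $t(m)=e$, $\gamma^m=1$, and $-\gamma^{-m}=-1$ has order $2=2\,o(\gamma^m)$, consistent with the third case.
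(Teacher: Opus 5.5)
Your proof is correct and takes essentially the paper's approach: identify $-1=\gamma^{r}$, write the element as a power of $\gamma$, and invoke Lemma~\ref{gcdlem}. The only difference is small. The paper substitutes $n=q-1-m$ and compares $\gamma^n$ with $-\gamma^n=\gamma^{r+n}$, so it must transfer the conditions on $t(m)$ to $t(n)$ and quietly absorb $m=q-1$, where $n=0$ falls outside Lemma~\ref{gcdlem}. You instead pass to the inverse $-\gamma^{m}=\gamma^{r+m}$ and apply Lemma~\ref{gcdlem} to $m$ directly, which is slightly cleaner.
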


\begin{proof}
As $\gamma^m$ and $\gamma^{-m}$ have the same orders, we can replace $m$ by $n\coloneqq q-1-m$ and then compare $\gamma^n$ and $-\gamma^n$. Note that
\[-\gamma^{n}=(-1)\gamma^{n}=\gamma^{r}\gamma^{n}=\gamma^{r+n}\]
where again $q-1=2r$. Thus $-\gamma^{n}$ has order $(q-1)/\gcd(q-1,r+n)$. Next observe that the case assumptions for $t(m)$ in Lemma~\ref{gcdlem} are equivalent to corresponding assumptions for $t(n)$; in fact, $t(n)=t(m)$ if $t(m)\leq e-1$ (or $t(n)\leq e-1$), and $t(n)\geq e$ if and only if $t(m)\geq e$. Then $\gcd(q-1,r+n)$ and $\gcd(q-1,n)$ are related as described in Lemma~\ref{gcdlem}. It follows that the orders of $\gamma^{m}$ and $-\gamma^{-m}$ are related as stated in the lemma.
\end{proof}

\section{Construction of chiral polyhedra}
\label{constr}

In this section, we present a general construction of chiral or regular polyhedra $\mathcal{P}$ from certain subgroups of $\mathrm{AGL}(1,q)$. This amounts to describing a pair of generators $\sigma_1,\sigma_2$ for a subgroup $\Gamma$ of the automorphism group $\Gamma(\mathcal{P})$ of $\mathcal{P}$, which coincides with $\Gamma(\mathcal{P})$ if $\mathcal{P}$ is chiral or has index $2$ in $\Gamma(\mathcal{P})$ if $\mathcal{P}$ is regular; or equivalently, to describing a pair of generators $\sigma_2,\tau$ where $\tau$ is an involution constrained by $\tau=\sigma_1\sigma_2$. The polyhedron is chiral if and only if the corresponding group $\Gamma$ does not admit an involutory automorphism $\rho$ such that $\rho(\sigma_{2})=\sigma_2^{-1}$ and $\rho(\tau)=\tau$. We will assume that $q\neq 3$, as $\mathrm{AGL}(1,3)$ has order 6 and is too small for the group of a chiral or regular polyhedron.
 
The particular nature of involutions in $\mathrm{AGL}(1,q)$ severely restricts the possible choices for the involutory product $\sigma_1\sigma_2$ if $\Gamma$ is to be a subgroup of $\mathrm{AGL}(1,q)$. In fact, by Lemma~\ref{aglprops}, $\sigma_1\sigma_2$ must be a transformation of the form $\alpha(-1,c)$ given by $x \mapsto -x+c$, $c\in \mathbb{F}_q$. There are precisely $q$ such transformations in $\mathrm{AGL}(1,q)$.

We begin with 
\begin{equation}
\label{sig2}
\sigma_{2}\coloneqq \alpha(a,b),\;\tau\coloneqq \alpha(-1,c),
\end{equation}
with $a\in\mathbb{F}_q^*$, $b,c\in\mathbb{F}_q$, $a\neq \pm 1$, and importantly,
\begin{equation}
\label{condabc}
c\neq 2b/(1-a).
\end{equation}
The latter condition on $c$ will become clear in a moment but will also resurface several times later in the text. Now define 
\begin{equation}
\label{sig1}
\sigma_{1}\coloneqq \tau\sigma_2^{-1} = \alpha(-1,c)\,\alpha(a^{-1},-a^{-1}b)=\alpha(-a^{-1},a^{-1}b+c) .
\end{equation}
We let $\Gamma\coloneqq \langle\sigma_1,\sigma_2\rangle=\langle\tau,\sigma_2\rangle$. By construction, $\sigma_1\sigma_2=\tau$ is an involution. 

Note that the orders of $\sigma_2$ and $\sigma_1$ in $\Gamma$ are just the orders of $a$ and $-a^{-1}$ in $\mathbb{F}_q^*$, respectively, and are related as described in Lemma~\ref{neggam}. For example, if $a$ has order $q-1$ (that is, $a$ generates $\mathbb{F}_q^*$), then $-a^{-1}$ also has order $q-1$ if $q\equiv 1 \bmod 4$, but has order $(q-1)/2$ if $q\equiv 3 \bmod 4$. 

Next we show that $\Gamma$ has the intersection property, that is, 
$\langle\sigma_1\rangle \cap \langle\sigma_2\rangle = \{1\}$. Suppose $\sigma_1^i = \sigma_2^j$ for some $i$ and $j$, that is,
\[\alpha((-a^{-1})^{i},\tfrac{(-a^{-1})^i-1}{(-a^{-1})-1}(a^{-1}b+c))
\,=\, \alpha(a^j, \tfrac{a^j-1}{a-1}b) .\]
Then necessarily $a^j=(-a^{-1})^i$ and thus 
$\tfrac{a^{-1}b+c}{(-a^{-1})-1}=\tfrac{b}{a-1}$. 
The latter equation immediately gives $(b+ac)(a-1)=-b(1+a)$ and therefore $c=2b/(1-a)$. This is the value of $c$ that had been excluded above. Thus $\Gamma$ has the intersection property. 

Note that the intersection property indeed fails if $c=2b/(1-a)$. In fact, verifying the intersection property in this particular case just reduces to solving the equation $a^j=(-a^{-1})^i$ or, equivalently, $a^{i+j}=(-1)^i$; but this equation holds for many pairs $i,j$, for any $q$ and $a$.

Given a polyhedron already known to admit all automorphisms required for chirality, it often is a difficult task to determine whether it actually is chiral, as opposed to regular. In the current situation we can rule out that $\mathcal{P}$ is a regular polyhedron with its full automorphism group $\Gamma(\mathcal{P})$ contained in $\mathrm{AGL}(1,q)$, even though its rotation subgroup $\Gamma$ does lie fully in $\mathrm{AGL}(1,q)$. 

In fact, suppose that $\mathcal{P}$ is regular. Then its full automorphism group $\Gamma(\mathcal{P})$ is generated by involutions $\rho_0,\rho_1,\rho_2$ and contains $\Gamma$ as a subgroup (of index 1 or 2). If $\rho_0,\rho_1,\rho_2$ were to lie in $\mathrm{AGL}(1,q)$, then 
by Lemma~\ref{aglprops}, each $\rho_i$ would necessarily have to be of the form $\alpha(-1,c)$ for some $c\in\mathbb{F}_q$ and therefore
\[\Gamma\leq\Gamma(\mathcal{P})=\langle\rho_0,\rho_1,\rho_2\rangle \leq H.\]
But $\sigma_2=\alpha(a,b)\not\in H$ since $a\neq \pm 1$, which is a contradiction. Theorem~\ref{noregs} below will revisit this circle of arguments and show that no subgroup of $\mathrm{AGL}(1,q)$ can be the automorphism group of a regular polyhedron of polytope of higher rank. Note that the above reasoning also shows that if $\mathcal{P}$ is regular, then its rotation subgroup $\Gamma$ must have index 2 in the full automorphism group $\Gamma(\mathcal{P})$. Thus, $\mathcal{P}$ is orientable, regardless of whether it is chiral or regular.

Still, the previous arguments do not yet establish chirality of $\mathcal{P}$. It still could happen that there is an  involutory group automorphism $\rho$ of $\Gamma$ such that $\rho(\sigma_2)=\sigma_2^{-1}$ and $\rho(\tau)=\tau$. While this seems rather difficult to establish in the general case, we will reject this possibility at least in the case when $\Gamma=\rm{AGL}(1,q)$. 
\bigskip

\begin{theorem}
\label{gamprops}
Let $q$ be an odd prime power, $q\neq 3$, and let $\Gamma\coloneqq \langle \sigma_1,\sigma_2\rangle$ where the generators $\sigma_1$ and $\sigma_2$ are defined as in \eqref{sig2} and~\eqref{sig1}, with $a\in\mathbb{F}_q^*$, $b,c\in\mathbb{F}_q$, $a\neq \pm 1$, and $c\neq 2b/(1-a)$. Let $\mathbb{F}_q^*=\langle\gamma\rangle$ and $a=\gamma^m$, $1\leq m<q-1$. Further, let $T_\Gamma\coloneqq T\cap \Gamma$ and $H_\Gamma\coloneqq H\cap \Gamma\, (\cong T_\Gamma\rtimes C_2)$. Then, \\[.03in]
(a)\ $\Gamma$ is either the group of a chiral polyhedron or the rotation subgroup of a regular polyhedron $\mathcal{P}$, in either case with distinguished generators $\sigma_1,\sigma_2$. \\[.03in]
(b)\ $\mathcal{P}$ is of type $\{s,t\}$, where $s=o(-a^{-1})$ and $t=o(a)$ are the orders of $-a^{-1}$ and $a$ in $\mathbb{F}_q^*$, respectively. \\[.03in]
(c)\ $\Gamma \cong T_\Gamma \rtimes \langle \sigma_2 \rangle$ if $\gcd(q-1,m)\,|\,r$ and $\Gamma \cong H_\Gamma \ltimes \langle \sigma_2 \rangle$ if $\gcd(q-1,m)\!\!\not|\,r$.\\[.03in]
(d)\ $H_\Gamma = N(\tau)$, where $N(\tau)$ denotes the normal closure of $\tau=\sigma_1\sigma_2$ in $\Gamma$.
\\[.03in]
(e)\ $T_\Gamma$ contains all translations by elements of the subgroup $M_\Gamma \coloneqq  \langle\, (a^k-1)g\mid k\geq 0\rangle$, 
with $g\coloneqq c-\tfrac{2b}{1-a}$, of the additive group $\mathbb{F}_q$ whose rank is at least 
$\lceil \log_{p} ((q-1)/\gcd(q-1,m))\rceil$. \\[.03in]
(f)\ $T_{\Gamma}\cong \mathbb{F}_q$, at least if $m$ and $q-1$ are relatively prime or $q$ itself is a prime.\\[.03in]
(g)\ $\Gamma=\mathrm{AGL}(1,q)$ if and only if either $\gcd(q-1,m)=1$ (that is, $\mathbb{F}_q^*=\langle a\rangle$) or $q\equiv 3\bmod 4$ and ${\gcd(q-1,m)=2}$.
\end{theorem}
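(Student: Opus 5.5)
Parts (a) and (b) come almost directly from the discussion before the statement. We have shown that $\tau=\sigma_1\sigma_2$ is an involution. We have also shown that $\langle\sigma_1\rangle\cap\langle\sigma_2\rangle=\{1\}$ precisely because $c\neq 2b/(1-a)$. The characterization recalled in Section~\ref{abpo} then gives (a). For (b), the type is $\{o(\sigma_1),o(\sigma_2)\}$, and Lemma~\ref{aglprops}(d) identifies these orders with $o(-a^{-1})$ and $o(a)$. Since $a\neq\pm1$, both orders are at least $3$.

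For the structural parts I would work with the determinant map $\pi:\mathrm{AGL}(1,q)\to\mathbb{F}_q^*$, $\alpha(a,b)\mapsto a$, whose kernel is $T$. The image is $\pi(\Gamma)=\langle a,-1\rangle$. If $\gcd(q-1,m)\mid r$, then $-1=\gamma^r$ lies in $\langle a\rangle=\langle\gamma^{\gcd(q-1,m)}\rangle$. In that case $\pi$ restricted to $\langle\sigma_2\rangle$ is an isomorphism onto $\pi(\Gamma)$, so $\Gamma=T_\Gamma\rtimes\langle\sigma_2\rangle$. Otherwise $\pi(\Gamma)=\langle a\rangle\times\{\pm1\}$, and $\pi^{-1}(\{\pm1\})\cap\Gamma=H_\Gamma$ is normal with complement $\langle\sigma_2\rangle$. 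This gives (c).

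For (d), $N(\tau)\le H_\Gamma$ because $H$ is normal in $\mathrm{AGL}(1,q)$. For the reverse inclusion I would show that $\Gamma/N(\tau)$ is generated by the image of $\sigma_2$, so it is cyclic and its order divides $o(a)$. Its image under $\pi$ then meets $\{\pm1\}$ only in the part coming from $\tau$, and comparing $|\Gamma|/|N(\tau)|$ with $|\Gamma|/|H_\Gamma|=|\pi(\Gamma)|/2$ (or $|\pi(\Gamma)|$) closes the gap. Concretely, $N(\tau)$ contains all products $\tau\cdot\sigma_2^k\tau\sigma_2^{-k}$, which are translations, and I will show in (e) that these generate $T_\Gamma$.

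For (e) the plan is to conjugate so that $\sigma_2$ fixes $0$. The element $\sigma_2=\alpha(a,b)$ fixes $x_0=b/(1-a)$, and after translating by $-x_0$ we get $\sigma_2=\alpha(a,0)$ and $\tau=\alpha(-1,g)$ with $g=c-2b/(1-a)$. Then $\sigma_2^k\tau\sigma_2^{-k}=\alpha(-1,a^kg)$, and the product $\tau\cdot\sigma_2^k\tau\sigma_2^{-k}$ is the translation by $g-a^kg$, i.e.\ by $\pm(a^k-1)g$. So $T_\Gamma\supseteq M_\Gamma$. In fact $T_\Gamma=M_\Gamma$, since $\Gamma$ is generated by $\alpha(a,0)$ and $\alpha(-1,g)$, which makes (d) immediate. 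Since $g\ne0$, $M_\Gamma$ contains $g\,\mathbb{F}_p[a]$ after closing under multiplication by $a$: $M_\Gamma$ is $\langle a\rangle$-stable and contains $(a-1)g$, hence contains $(a-1)g\langle a\rangle$, a set of $o(a)$ distinct elements. A subgroup of rank $k$ has $p^k\ge o(a)=(q-1)/\gcd(q-1,m)$ elements, which gives the bound.

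For (f), if $\gcd(q-1,m)=1$ the bound gives $p^k\ge q-1$, hence $k=l$. If $q=p$, then $M_\Gamma\neq0$ already forces $M_\Gamma=\mathbb{F}_p$.

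For (g) I would combine (c) and (f) with orders. $\Gamma=\mathrm{AGL}(1,q)$ requires $\pi(\Gamma)=\langle a,-1\rangle=\mathbb{F}_q^*$. This happens if $\gcd=1$, or if $\gcd=2$ and $-1\notin\langle\gamma^2\rangle$, i.e.\ $r$ odd, i.e.\ $q\equiv3\bmod4$. Conversely, these are the only possibilities, since $\langle a,-1\rangle$ has index at most $\gcd(q-1,m)/\gcd(q-1,m,r)$. In the $\gcd=2$ case one must also show $T_\Gamma=\mathbb{F}_q$. Here $M_\Gamma$ is an $\mathbb{F}_p[a]$-submodule, and $\mathbb{F}_p[a]$ with $a=\gamma^2$ generates a subfield of $\mathbb{F}_q$ containing all squares. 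The squares number $(q-1)/2>\sqrt q$ for $q>3$, so that subfield is $\mathbb{F}_q$, and hence $M_\Gamma=\mathbb{F}_q g=\mathbb{F}_q$.

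I expect the main obstacle to be making (d) and the module argument in (g) fully rigorous. In particular, the step $\mathbb{F}_p[a]=\mathbb{F}_q$ needs the observation that a proper subfield has at most $\sqrt q<(q-1)/2$ elements. The same idea, applied with $\mathbb{F}_p[a]$, also strengthens (f).
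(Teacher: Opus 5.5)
Your proposal is correct. It takes a more structural route than the paper's proof.

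\textbf{How the two routes differ.} The paper writes $\Gamma=N(\tau)\langle\sigma_2\rangle$, shows $N(\tau)\le H_\Gamma$, and decides when $H_\Gamma\cap\langle\sigma_2\rangle\neq 1$ by solving $mk\equiv r \pmod{q-1}$. It then gets (c) and (d) by comparing orders. For (e) it counts the $o(a)$ distinct values $(a^k-1)g$, and (f), (g) rest on the estimate $(p^l-1)/2>p^{l-1}$.

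You use instead the homomorphism $\pi$ with kernel $T$ and $\pi(\Gamma)=\langle a,-1\rangle$, which turns (c) into a transparent complement statement. Your conjugation by a translation to get $b=0$ exposes $M_\Gamma$ as a module over the field $\mathbb{F}_p[a]$.

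\textbf{What your route gains.} It gives more than the theorem asks. Since $a-1$ is invertible in $\mathbb{F}_p[a]$, you get $M_\Gamma=g\,\mathbb{F}_p[a]$. The set $\{\alpha(u,v): u\in\langle a,-1\rangle,\ v\in g\,\mathbb{F}_p[a]\}$ is closed under multiplication and contains $\alpha(a,0)$ and $\alpha(-1,g)$. Hence $T_\Gamma=M_\Gamma\cong(\mathbb{F}_p[a],+)$ exactly. Write this closure check out; it is what your phrase ``since $\Gamma$ is generated by $\alpha(a,0)$ and $\alpha(-1,g)$'' stands for. As a consequence, (f) becomes an equivalence ($T_\Gamma\cong\mathbb{F}_q$ if and only if $a$ lies in no proper subfield), and $|\Gamma|$ is determined.

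Your route also supplies a step that the paper's ``comparing group orders'' leaves implicit when $\gcd(q-1,m)\mid r$. Counting only yields $|N(\tau)|=|T_\Gamma|\cdot|N(\tau)\cap\langle\sigma_2\rangle|$. One still needs the involution $\sigma_2^{o(a)/2}$ to lie in $N(\tau)$, which amounts to $g\in M_\Gamma$. That holds because, for $a^k=-1$, the element $\sigma_2^k\tau\sigma_2^{-k}\tau$ is the translation by $-2g$ and $p$ is odd. For the same reason, your preliminary quotient argument for (d) cannot settle that case on its own; the argument through $T_\Gamma=M_\Gamma$ is the one that works.

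\textbf{Two slips to fix.}

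First, in (g), $\gcd(q-1,m)/\gcd(q-1,m,r)$ is the index of $\langle a\rangle$ in $\langle a,-1\rangle$, not of $\langle a,-1\rangle$ in $\mathbb{F}_q^*$. The latter is $\gcd(m,r)$, since $\langle a,-1\rangle=\langle\gamma^{\gcd(m,r)}\rangle$. For example, with $q=13$ and $m=3$ it is $3$, while your bound gives $1$. Necessity needs a lower bound on the index in any case. The corrected statement gives one: the index is at least $\gcd(q-1,m)/2$, and it equals $1$ precisely when $\gcd(q-1,m)=1$, or when $\gcd(q-1,m)=2$ with $r$ odd.

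Second, $(q-1)/2>\sqrt q$ fails at $q=5$. This is harmless, since the $\gcd=2$ case of (g) has $q\equiv 3\bmod 4$ and hence $q\ge 7$. Alternatively, count $0$ as well and use $(q+1)/2>\sqrt q$, or use the paper's bound from (e).
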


\begin{proof}
For the third part recall that $\tau=\sigma_1\sigma_2=\alpha(-1,c)\in\Gamma$. Then 
\[\sigma_2\tau\sigma_2^{-1}=\alpha(a,b)\,\alpha(-1,c)\,\alpha(a^{-1},-a^{-1}b)
=\alpha(-1,2b+ac),\]
with $2b+ac\neq c$ by our initial assumption on $c$. It follows that $\tau$ and $\sigma_2\tau\sigma_2^{-1}$ are distinct involutions and therefore their product is a translation in $T$. Note that $T\cap\langle\sigma_2\rangle=1$.

Next note that $\Gamma=N(\tau)\cdot \langle\sigma_2\rangle$, as a product of subgroups, where $N(\tau)$ is the normal closure of $\tau$ in $\Gamma$ which here is generated by the conjugates $\sigma_2^{k}\tau\sigma_{2}^{-k}$, $k\geq 0$. To show this by way of example, write an element like $\tau\sigma_2^i\tau\sigma_2^j\tau\sigma_2^l$ in the form 
\[\tau\cdot (\sigma_2^i\tau\sigma_2^{-i})\cdot(\sigma_2^{i+j}\tau\sigma_2^{-(i+j)})\cdot\sigma_2^{l+(i+j)}.\]
On the other hand, all conjugates of $\tau$ are of the form $\alpha(-1,d)$, $d\in\mathbb{F}_q$, so $N(\tau)$ lies in $H_\Gamma$. It remains to look at $H_{\Gamma}\cap\langle\sigma_2\rangle$. Note that $T_\Gamma$ is a subgroup of $H_\Gamma$ of index 2, and $H_\Gamma\cong T_{\Gamma}\rtimes \langle\tau\rangle$.

Recall that 
\[\sigma_2^k=\alpha(a,b)^k = \alpha(a^k, \tfrac{a^k-1}{a-1}b)\]
for each $k\geq 0$. Then, since $\langle\sigma_2\rangle$ intersects $T$ trivially, we have $\sigma_2^{k}\in H_\Gamma$ if and only if $a^{k}=\pm 1$ in $\mathbb{F}_q^*$; here, $a^{k}=1$ if and only if $\sigma_2^k$ is trivial. If follows that $H_{\Gamma}\cap\langle\sigma_2\rangle\neq 1$ if and only if $-1\in\langle a\rangle$ in $\mathbb{F}_q^*$. Now recall that $a=\gamma^m$ where $\mathbb{F}_q^*=\langle\gamma\rangle$. Then $a^{k}=-1$ if and only if $\gamma^{mk}=\gamma^r$, that is, if and only $mk\equiv r \bmod q-1$. 
This equation has a solution for $k$ if and only if $\gcd(q-1,m)\,|\,r$. It follows that 
$H_{\Gamma}\cap\langle\sigma_2\rangle\neq 1$ if and only if $\gcd(q-1,m)\,|\,r$. 

If $\gcd(q-1,m)\,|\,r$, then $H_\Gamma$ intersects the subgroup $\langle\sigma_2\rangle$ nontrivially but its subgroup $T_\Gamma$ does not. Thus, $\Gamma \cong T_\Gamma \ltimes \langle \sigma_2 \rangle$. Comparing group orders then shows that $N(\tau)=H_\Gamma$.

If $\gcd(m,q-1)\!\!\!\not\!|\,r$, then also $N(\tau)\cap\langle\sigma_2\rangle=1$ since $N(\tau)$ lies in $H_\Gamma$, and thus $\Gamma \cong N(\tau) \ltimes \langle \sigma_2 \rangle$. 
But the product $H_{\Gamma}\cdot\!\langle \sigma_2 \rangle$ is a subgroup of $\Gamma$ isomorphic to $H_{\Gamma}\rtimes\langle \sigma_2 \rangle$ and has the same order as $\Gamma$. Hence $N(\tau)=H_\Gamma$ and $\Gamma \cong H_\Gamma \ltimes \langle \sigma_2 \rangle$. This completes the proof of the third and fourth part of the theorem.

As a subgroup of an elementary abelian group, $T$, the group $T_\Gamma$ is itself elementary abelian. To find its rank, we first compute the generators of $N(\tau)$, and find that these are given by
\[\sigma_2^{k}\tau\sigma_2^{-k} = \alpha(-1,\tfrac{a^k-1}{a-1}\!\cdot\! 2b+a^kc),\;\,k\geq 0.\]
Then $\Gamma$ contains the translations
\[\beta_k\coloneqq \sigma_2^{k}\tau\sigma_2^{-k}\tau = \alpha(1,\tfrac{a^k-1}{a-1}\!\cdot\! 2b+(a^{k}-1)c)
=\alpha(1,(a^k-1)(c-\tfrac{2b}{1-a}))=\alpha(1,(a^k-1)g),\]
with $g=c-\tfrac{2b}{1-a}$ as defined in the theorem. Note that $g\neq 0$, by our assumption on $c$, and that $g$ does not depend on $k$. Thus $\beta_k$ is the translation by $(a^k-1)g$ for each $k$. It follows that $T_\Gamma$ contains all translation by elements from the subgroup $M_\Gamma$ of $\mathbb{F}_q$ generated by the subset
\[M'_\Gamma \coloneqq  \{(a^k-1)g\mid k\geq 0\}.\]

Now recall that $a=\gamma^m$ and has order $o(a)=(q-1)/\gcd(q-1,m)$. This shows that $M'_\Gamma$ contains $(q-1)/\gcd(q-1,m)$ translations, so $M_\Gamma$ has rank at least 
\[\lceil \log_{p} ((q-1)/\gcd(q-1,m))\rceil.\]
Hence, if $m$ and $q-1$ are relatively prime (and thus $\langle a\rangle=\mathbb{F}_q$), then the rank of $M_\Gamma$ equals the rank of $\mathbb{F}_q$ and thus $T_{\Gamma}\cong \mathbb{F}_q$. This remains true even if $\gcd(q-1,m)=2$; if fact, if $q=p^l$ with $l\geq 1$, then
$p^{l -1}(p-2)> 1$, so $(p^l -1)/2> p^{l -1}$ giving 
\[\lceil \log_{p} ((q-1)/\gcd(q-1,m))\rceil \geq l,\] 
which is just the rank of $\mathbb{F}_q$. (A similar argument shows that, for any value of $\gcd(q-1,m)$, the rank of $M_\Gamma$ is $l$ for all $q$ with $p>\gcd(q-1,m)$.) Moreover, regardless of $m$, if $q$ itself is a prime, then $T_{\Gamma}\cong \mathbb{F}_q$; this also covers the case $l =1$ in the previous claim. 

It remains to prove the final part of the theorem. If $m$ and $q-1$ are relatively prime, then by part (c), 
\[\Gamma \cong T_\Gamma \rtimes \langle \sigma_2\rangle \cong \mathbb{F}_q\rtimes \mathbb{F}_q^{*}\cong\mathrm{AGL}(1,q)\]
and thus $\Gamma=\mathrm{AGL}(1,q)$. If $q\equiv 3\bmod 4$ and $\gcd(q-1,m)=2$, then the above considerations show that still $T_\Gamma\cong\mathbb{F}_q$ and thus $H_\Gamma$ has order $2q$. Moreover, since $\gcd(q-1,m)=2$, the generator $\sigma_2$ has order $(q-1)/2$. As $\gcd(m,q-1)=2\!\!\not\!|\,r$, part (c) now gives the semidirect product $\Gamma\cong H_{\Gamma} \ltimes \langle \sigma_2\rangle$ of order $q(q-1)$, which then forces $\Gamma=\mathrm{AGL}(1,q)$.

Conversely, if $\Gamma=\mathrm{AGL}(1,q)$ then $T_\Gamma\cong\mathbb{F}_q$. Now recall the two possibilities for $\Gamma$ described in part (c) and bear in mind that $\Gamma$ has order $q(q-1)$. It follows that in the first case (when $\gcd(m,q-1)\,|\,r$) the element $\sigma_2$ has order $q-1$, that is, $o(a)=q-1$; this forces $\gcd(m,q-1)=1$. In the second case (when $\gcd(m,q-1)\!\!\not|\,r$), $H_\Gamma$ has order $2q$ and $\sigma_2$ has order $(q-1)/2=r$, giving $\gcd(m,q-1)=2$ and $q\equiv 3\bmod 4$.
\end{proof}
\bigskip

\begin{remark}
\label{dual}
Let $q$ be an odd prime power, $q\neq 3$, and let $a\in\mathbb{F}_q^*$, $b,c\in\mathbb{F}_q$, $a\neq \pm 1$, and $c\neq 2b/(1-a)$. Let $\Gamma\coloneqq \langle \sigma_1,\sigma_2\rangle$ with generators $\sigma_1,\sigma_2$ defined as in \eqref{sig2} and~\eqref{sig1}, and let $\mathcal{P}(a,b,c)$ denote the polyhedron of Theorem~\ref{gamprops} of type $\{s,t\}$ associated with $\Gamma$. Then the dual polyhedron, $\mathcal{P}(a,b,c)^*$, is given 
\[\mathcal{P}(a,b,c)^*=\mathcal{P}(-a,b+ac,c).\]
In fact, the dual polyhedron is associated with the pair of generators $\sigma_1^*,\sigma_2^*$ of $\Gamma$ given by
\[\sigma_1^{*}:=\sigma_2^{-1}=\alpha(a^{-1},-a^{-1}b),\quad 
\sigma_2^{*}:=\sigma_1^{-1} =\alpha(-a,b+ac).\]
Note that $\tau^{*}:=\sigma_1^*\sigma_2^*=(\sigma_1\sigma_2)^{-1}=\tau=\alpha(-1,c)$. The Schl\"afli type of 
$\mathcal{P}(a,b,c)^*$ is of course given by $\{t,s\}$ with $t=o(a)$ and $s=o(-a^{-1})$, which is consistent with $t=o(\sigma_1^*)=o(a^{-1})$ and $s=o(\sigma_2^*)=o(-a)$.
\end{remark}
\bigskip

A particularly interesting situation occurs when the group $\Gamma$ of Theorem~\ref{gamprops} is the full group $\mathrm{AGL}(1,q)$. As we will see below, the corresponding polyhedron will indeed be chiral in this case. However, we will show that there are instances of proper subgroups $\Gamma$ of $\mathrm{AGL}(1,q)$ where the corresponding polyhedron will be regular, not chiral.

For the proofs we require the following lemma.

\begin{lemma}
\label{conjlemma}
Let $\rho:=\gamma(g,h,\sigma)$ be an involutory group automorphism of $\mathrm{AGL}(1,q)$, where $\sigma$ is an involution in $\rm{Aut}(\mathbb{F}_q)$ and $g\in\mathbb{F}_q^*$, $h\in\mathbb{F}_q$ with $g\overline{g}= 1$, $g\overline{h}+h=0$ (see Lemma~\ref{invgrauts}). Let $\Gamma=\langle\sigma_2,\tau\rangle$ be as in Theorem~\ref{gamprops}, with $\sigma_{2}=\alpha(a,b)$ and $\tau=\alpha(-1,c)$ as in (\ref{sig2}). Suppose $\rho$ acts by conjugation as a group automorphism on $\Gamma$, that is, $\rho\,\Gamma\rho=\Gamma$. \\[.03in]
(a)\ If $\rho \sigma_2\rho = \sigma_2^{-1}$, then $\overline{a}=a^{-1}$ and thus $a^{p^{l/2} +1}=1$.\\[.03in]
(b)\ If $\rho \sigma_2\rho = \sigma_2^{-1}$ and $\rho\tau\rho=\tau$, then $\overline{a}=a^{-1}$ and $g,h$ are the unique solutions of the nonsingular linear system
\begin{equation}
\label{linsystem}
\left\{\begin{array}{rcrcr}
\overline{b} g &+& (1-\overline{a})h &=& -\overline{a}b \\
\overline{c} g &+& 2h &= &c,
\end{array}\right.
\end{equation}
where for simplicity the variables of the system were also denoted by $g$ and $h$. The solutions to \eqref{linsystem} are given by 
\begin{equation}
\label{systemsols}
g=-\overline{a}\,(\overline{D}/{D}),
\;\;h=(\overline{b}c+\overline{a}b\overline{c})/D,
\end{equation}
where $D:=2\overline{b}-\overline{c}(1-\overline{a})$ is the (non-vanishing) determinant of the linear system.
\end{lemma}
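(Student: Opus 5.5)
The plan is to compute directly how conjugation by $\rho=\gamma(g,h,\sigma)$ acts on a general element $\alpha(u,v)$ of $\mathrm{AGL}(1,q)$, and then to compare coefficients.

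\textbf{Conjugation formula.} Since $\rho$ is an involution we have $\rho^{-1}=\rho$, so the conjugate is $\rho\,\alpha(u,v)\,\rho$. Write $\overline{x}=x^\sigma$ and evaluate at $x$:
\[
\rho\,\alpha(u,v)\,\rho(x)=g\,\overline{u g\overline{x}+uh+v}+h=\overline{u}\,g\overline{g}\,x+g\overline{u}\,\overline{h}+g\overline{v}+h .
\]
The simplification uses that $\sigma$ is a field automorphism with $\overline{\overline{x}}=x$. Now apply the relations of Lemma~\ref{invgrauts}, namely $g\overline{g}=1$ and $g\overline{h}=-h$. This gives the key identity
\[
\rho\,\alpha(u,v)\,\rho=\alpha\bigl(\overline{u},\;g\overline{v}+(1-\overline{u})h\bigr).
\]

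\textbf{Part (a).} Apply the identity with $(u,v)=(a,b)$ and compare with $\sigma_2^{-1}=\alpha(a^{-1},-a^{-1}b)$ from Lemma~\ref{aglprops}(a). The linear coefficients give $\overline{a}=a^{-1}$. Since $\overline{a}=a^{p^{l/2}}$, this says $a^{p^{l/2}+1}=1$.

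\textbf{Part (b), deriving the system.} Using $\overline{a}=a^{-1}$, the constant term of $\rho\sigma_2\rho=\sigma_2^{-1}$ reads $g\overline{b}+(1-\overline{a})h=-\overline{a}b$. This is the first equation of \eqref{linsystem}. Next apply the identity with $(u,v)=(-1,c)$. Since $\overline{-1}=-1$, the condition $\rho\tau\rho=\tau$ gives $g\overline{c}+2h=c$, which is the second equation. So the true $g,h$ satisfy \eqref{linsystem}.

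\textbf{Nonsingularity.} This is the only point needing care. The determinant is $D=2\overline{b}-\overline{c}(1-\overline{a})$. Applying $\sigma$ gives $\overline{D}=2b-c(1-a)$. This is nonzero exactly because $c\neq 2b/(1-a)$, using $a\neq1$. Since $\sigma$ is a field automorphism, $D\neq0$ as well. Hence the solution of \eqref{linsystem} is unique and coincides with the given $g,h$.

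\textbf{Solving the system.} Use Cramer's rule. For $g$ the numerator is $-2\overline{a}b-c(1-\overline{a})$. To match the claimed form, expand $-\overline{a}\,\overline{D}=-2\overline{a}b+\overline{a}c-\overline{a}ac$ and use $\overline{a}a=1$; this gives exactly that numerator, so $g=-\overline{a}\,\overline{D}/D$. For $h$ the numerator is $\overline{b}c+\overline{a}b\overline{c}$, as claimed in \eqref{systemsols}.

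I do not expect a real obstacle here. The only delicate steps are keeping track of the bar correctly in the conjugation formula and deducing $D\neq0$ from the standing assumption on $c$.
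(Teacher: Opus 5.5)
Your proposal is correct and follows essentially the same route as the paper: conjugate $\sigma_2$ and $\tau$ by $\rho$ using $g\overline{g}=1$ and $g\overline{h}+h=0$, compare coefficients, get nonsingularity from $\overline{D}=2b-c(1-a)\neq 0$, and solve by Cramer's rule. The only difference is cosmetic: you package both computations into the single formula $\rho\,\alpha(u,v)\,\rho=\alpha(\overline{u},\,g\overline{v}+(1-\overline{u})h)$, whereas the paper computes the two conjugates separately.
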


\begin{proof}
The conjugate of $\sigma_2$ by $\rho$ is given by
\begin{equation}
\label{conjrho2}
\begin{array}{rcl}
\rho\sigma_2\rho(x)\,=\,\rho\sigma_2 (g\overline{x}+h)\!\!&=&\!\!\rho(a(g\overline{x}+h)+b)\\[.03in]
\!\!&=&\!\!g\;\overline{(ag\overline{x}+ah+b)}+h \\[.03in]
&=&\!\! (g\overline{g})\overline{a}x+g\overline{a}\overline{h}+g\overline{b} +h
\,=\,\overline{a}x+g\overline{a}\overline{h}+g\overline{b} +h.
\end{array} 
\end{equation}
If this conjugate is to coincide with $\sigma_2^{-1}=\alpha(a^{-1},-a^{-1}b)$, then 
\[a^{-1}x-a^{-1}b = \overline{a}x+g\overline{a}\overline{h}+g\overline{b} +h\]
for all $x\in\mathbb{F}_q$ and therefore
\begin{equation}
\label{consigma}
\overline{a}=a^{-1},\;\, g\overline{a}\overline{h}+g\overline{b} +h=-a^{-1}b.
\end{equation}
In particular this proves the first part of the lemma. Note that $\overline{a}=a^{-1}$ just means $a^{p^{l/2} +1}=1$. Further, by assumption, $g\overline{h}+h=0$, so using $\overline{a}=a^{-1}$ allows us to rewrite the second equation in \eqref{consigma} as 
\begin{equation}
\label{chireq1}
\overline{b}g +(1-\overline{a})h=-\overline{a}b,
\end{equation}
which is a the first linear equation of \eqref{linsystem} in $g$ and $h$.

Similarly, the conjugate of $\tau$ by $\rho$ is given by
\begin{equation}
\label{conjtau}
\begin{array}{rcl}
\rho\tau\rho(x)\,=\,\rho\tau(g\overline{x}+h)\!\!&=&\!\!\rho(-(g\overline{x}+h)+c)\\[.03in]
\!\!&=&\!\! g\,\overline{(-g\overline{x}-h+c)}+h \\[.03in]
&=&\!\!-(g\overline{g})x-\!g\overline{h}+g\overline{c} +h
\,=\,-x-g\overline{h}+g\overline{c} +h.
\end{array} 
\end{equation}
If this conjugate is to coincide with $\tau$ itself, then $c=-g\overline{h}+g\overline{c} +h$ and therefore 
\begin{equation}
\label{chireq2}
\overline{c}g + 2h = c,
\end{equation}
which is the second linear equation of \eqref{linsystem} in $g$ and $h$. Here, we once again used that $g\overline{h}+h=0$. 

Therefore, if both $\rho \sigma_2\rho = \sigma_2^{-1}$ and $\rho\tau\rho=\tau$ hold, then the equations in (\ref{chireq1}) and (\ref{chireq2}) give the linear system of (\ref{systemsols}) in $g$ and $h$ with determinant $D=2\overline{b}-\overline{c}(1-\overline{a}) = \overline{2b-c(1-a)}$. 

Now recall from (\ref{sig2}) that our initial conditions on the parameters $a,b,c$ for the generators $\sigma_2,\tau$ of $\Gamma$ included the condition $c\neq 2b/(1-a)$ which ensured the intersection property for $\Gamma$. This shows that the linear system is nonsingular and has a unique solution in $g$ and $h$. Bearing in mind that $\overline{a}=a^{-1}$, the solutions can be expressed as 
\[ g=\frac{-2\overline{a}b-c(1-\overline{a})}{2\overline{b}-\overline{c}(1-\overline{a})}=
-\overline{a}\,\frac{\;\overline{D}\;}{\,D\,},
\;\;\;\; h=\frac{\overline{b}c+\overline{a}b\overline{c}}{2\overline{b}-\overline{c}(1-\overline{a})}=\frac{\overline{b}c+\overline{a}b\overline{c}}{D}.\]
Then we can check that, indeed, $g\overline{g}=1$ and $g\overline{h}+h=0$, as required by our assumptions. 
\end{proof}
\bigskip

Our next theorem deals with the most interesting case, namely when the group $\Gamma$ of Theorem~\ref{gamprops} is the full group $\mathrm{AGL}(1,q)$. This happens precisely when $\gcd(q-1,m)=1$ or $q\equiv 3\bmod 4$ and ${\gcd(q-1,m)=2}$.

\begin{theorem}
\label{gampropsagl}
Let $q$ be an odd prime power, $q\neq 3$, and let $\Gamma\coloneqq \langle \sigma_1,\sigma_2\rangle$ where the generators $\sigma_1$ and $\sigma_2$ are defined as in \eqref{sig2} and~\eqref{sig1}, with $a\in\mathbb{F}_q^*$, $b,c\in\mathbb{F}_q$, $a\neq \pm 1$, and $c\neq 2b/(1-a)$. Let $\mathbb{F}_q^*=\langle\gamma\rangle$ and $a=\gamma^m$, $1\leq m<q-1$, and let $\gcd(q-1,m)=1$ (that is, $\mathbb{F}_q^*=\langle a\rangle$) or $q\equiv 3\bmod 4$ and ${\gcd(q-1,m)=2}$. Then, \\[.03in]
(a)\ $\Gamma=\mathrm{AGL}(1,q)$ and $\Gamma$ is the group of a chiral polyhedron $\mathcal{P}$ with distinguished generators $\sigma_1,\sigma_2$. \\[.03in]
(b)\ If $\gcd(q-1,m)=1$, then $\mathcal{P}$ is either of type $\{q-1,q-1\}$ and genus $1+q(q-5)/4$ if $q\equiv 1 \bmod 4$, or type $\{(q-1)/2,q-1\}$ and genus $1+q(q-7)/4$ if $q\equiv 3 \bmod 4$.\\[.03in]
(c)\ If $q\equiv 3\bmod 4$ and ${\gcd(q-1,m)=2}$, then $\mathcal{P}$ is the dual of the polyhedron $\mathcal{P}(-a,b+ac,c)$ obtained from Theorem~\ref{gamprops} for the parameters $-a,b+ac,c$ and occurring among the polyhedra in part (b).
\end{theorem}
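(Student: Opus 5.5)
The plan is to handle the three parts in order, using Theorem~\ref{gamprops} for the group structure, Lemmas~\ref{invgrauts} and~\ref{conjlemma} for chirality, Lemma~\ref{neggam} for the type, Euler's formula for the genus, and Remark~\ref{dual} for part (c).

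\emph{Part (a).} The equality $\Gamma=\mathrm{AGL}(1,q)$ is Theorem~\ref{gamprops}(g). By Theorem~\ref{gamprops}(a) it then suffices to rule out an involutory group automorphism $\rho$ of $\Gamma$ with $\rho(\sigma_2)=\sigma_2^{-1}$ and $\rho(\tau)=\tau$. Since $\mathrm{Aut}(\mathrm{AGL}(1,q))=\mathrm{A\Gamma L}(1,q)$, we can write $\rho=\gamma(g,h,\sigma)$ acting by conjugation, and I would split on whether $\sigma$ is trivial.

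\emph{Case $\sigma$ trivial.} Then $\rho$ is conjugation by some $x\in\mathrm{AGL}(1,q)$. Because $\mathrm{AGL}(1,q)$ has trivial center, $x$ must be an involution, so $x=\alpha(-1,d)$ by Lemma~\ref{aglprops}(e). Conjugating $\sigma_2=\alpha(a,b)$ by $x$ leaves the linear part $a$ unchanged. Hence $\rho(\sigma_2)=\sigma_2^{-1}$ forces $a=a^{-1}$, i.e.\ $a=\pm1$, which is excluded.

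\emph{Case $\sigma$ nontrivial.} By Lemma~\ref{invgrauts}, $\sigma$ is an involution, so $l$ is even. If $q\equiv 3\bmod 4$ then $l$ is odd, so this case cannot occur. If $q\equiv1\bmod4$, we are in the case $\gcd(q-1,m)=1$, so $o(a)=q-1$. Lemma~\ref{conjlemma}(a) then gives $a^{\sqrt q+1}=1$, so $q-1$ divides $\sqrt q+1$. This is impossible, since $q-1>\sqrt q+1$ whenever $\sqrt q>2$, and here $q\ge 9$. Thus no such $\rho$ exists and $\mathcal P$ is chiral.

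\emph{Part (b).} Here $t(m)=0$, and I apply Lemma~\ref{neggam}.
\begin{itemize}
\item If $q\equiv1\bmod 4$, then $e\ge2$ and $t(m)\le e-2$, so $s=o(-a^{-1})=q-1=t$.
\item If $q\equiv 3\bmod4$, then $e=1=t(m)+1$, so $s=(q-1)/2$ and $t=q-1$.
\end{itemize}
For the genus I use $V=|\Gamma|/t$, $E=|\Gamma|/2$, $F=|\Gamma|/s$ with $|\Gamma|=q(q-1)$, together with $2-2g=V-E+F$.
\begin{itemize}
\item For $q\equiv1\bmod4$: $V=F=q$, so $2-2g=2q-q(q-1)/2$, giving $g=1+q(q-5)/4$.
\item For $q\equiv 3\bmod4$: $V=q$ and $F=2q$, so $2-2g=3q-q(q-1)/2$, giving $g=1+q(q-7)/4$.
\end{itemize}

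\emph{Part (c).} By Remark~\ref{dual} and the involutivity of duality, $\mathcal P=\mathcal P(-a,b+ac,c)^*$. It remains to check that the parameters $(-a,b+ac,c)$ satisfy the hypotheses of part (b).
\begin{itemize}
\item $-a\ne\pm1$ is clear.
\item The excluded value condition $c\neq 2(b+ac)/(1+a)$ rearranges to $c(1-a)\ne 2b$, which holds by assumption.
\item Since $t(m)=1=e$, Lemma~\ref{neggam} gives $o(-a^{-1})=2\,o(a)=q-1$. Hence $-a$ generates $\mathbb{F}_q^*$, and $\mathcal P(-a,b+ac,c)$ is one of the polyhedra in (b), of type $\{(q-1)/2,q-1\}$.
\end{itemize}

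The main obstacle is the chirality argument in (a). Its critical points are the reduction of inner involutory automorphisms to conjugation by involutions, which uses the trivial center, and the order comparison $q-1>\sqrt q+1$ in the field-automorphism case.
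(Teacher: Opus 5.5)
Your proposal is correct and follows essentially the same route as the paper. The steps match: Theorem~\ref{gamprops}(g) gives $\Gamma=\mathrm{AGL}(1,q)$; an inner $\rho$ is excluded; a field-automorphism $\rho$ is excluded via Lemma~\ref{invgrauts} ($l$ even, impossible when $q\equiv 3\bmod 4$) and Lemma~\ref{conjlemma}(a) ($o(a)$ too large); Lemma~\ref{neggam} with Euler's formula gives type and genus; and Remark~\ref{dual} handles (c). Your observation that conjugation within $\mathrm{AGL}(1,q)$ preserves the linear part $a$ is a slightly cleaner way to dispose of the inner case than the paper's appeal to $H$.

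One harmless slip: in (c) you only know $t(m)\ge 1=e$ (for example $q=7$, $m=4$), not $t(m)=1$. The case $t(m)\ge e$ of Lemma~\ref{neggam} still gives the same conclusion $o(-a^{-1})=2\,o(a)=q-1$.
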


\begin{proof}
We know from Theorem~\ref{gamprops} that the corresponding polyhedron $\mathcal{P}$ is chiral or regular.  Moreover, by our assumptions on $m$, the group $\Gamma$ is the full group $\mathrm{AGL}(1,q)$. 

Recall from Remark~\ref{dual} that the dual of $\mathcal{P}$ is given by $\mathcal{P}(-a,b+ac,c)$, which is the polyhedron associated with the generators $\sigma_2^{-1},\sigma_1^{-1}$ of $\Gamma$. Now note $-a=(-1)a=\gamma^{r}\gamma^m=\gamma^{r+m}$, where again $r=(q-1)/2$. If $q\equiv 3 \bmod 4$ then $r$ is odd, and so $\gcd(q-1,m)=2$ if and only if $\gcd(q-1,r+m)=1$. It follows that as we pass from $a$ to $-a$, or vice versa, we are switching between polyhedra covered in different parts of the theorem, the second or third part. Thus, if $q\equiv 3 \bmod 4$, the polyhedra in third part are duals of polyhedra in the second part, and vice versa.

In the general situation, $\mathcal{P}$ is of Schl\"afli type $\{s,t\}$ with $s=o(-a^{-1})$ and $t=o(a)$. In particular, $t=(q-1)/\gcd(q-1,m)$, so $t=q-1$ if $\gcd(q-1,m)=1$ or $t=(q-1)/2$ if $\gcd(q-1,m)=2$. By Lemma~\ref{neggam}, the value of $s$ depends on $e$ as follows. First note that $e\geq 2$ if $q\equiv 1 \bmod 4$ and $e=1$ if $q\equiv 3 \bmod 4$, and therefore $t(m)=0$ if $\gcd(q-1,m)=1$, and $t(m)\geq 1$ if $\gcd(q-1,m)=2$ and $q\equiv 3 \bmod 4$. Then  Lemma~\ref{neggam} shows that 
\[s= \left\{\begin{array}{ll}
q-1=t & \mbox{if } q\equiv 1 \bmod 4\, \mbox{ and } \gcd(q-1,m)=1  \\ 
(q-1)/2=t/2 & \mbox{if } q\equiv 3 \bmod 4\, \mbox{ and } \gcd(q-1,m)=1 \\  
q-1=2t &  \mbox{if } q\equiv 3 \bmod 4\, \mbox{ and } \gcd(q-1,m)=2  \\
\end{array}\right. \]

Let $f_0$, $f_1$, and $f_2$ denote the number of vertices, edges, and faces of $\mathcal{P}$, respectively, and let $\chi = f_{0}-f_{1}+f_2$ denote the Euler characteristic and $g = 1-\chi/2$ the genus of the underlying (orientable) surface. If $\gcd(q-1,m)=1$ and $q\equiv 1 \bmod 4$, then the type is $\{q-1,q-1\}$ and $(f_0,f_1,f_2)=(q,q(q-1)/2,q)$ and thus $\chi=q(5-q)/2$ and $g=1+q(q-5)/4$. On the other hand, $\gcd(q-1,m)=1$ and $q\equiv 3 \bmod 4$, the type is $\{(q-1)/2,q-1\}$ and $(f_0,f_1,f_2)=(q,q(q-1)/2,2q)$ and thus $\chi=q(7-q)/2$ and $g=1+q(q-7)/4$. Finally, if $q\equiv 3\bmod 4$ and ${\gcd(q-1,m)=2}$, the type is $\{q-1,(q-1)/2\}$ and 
$(f_0,f_1,f_2)=(2q,q(q-1)/2,q/2)$ and thus $\chi=q(7-q)/2$ and $g=1+q(q-7)/4$. In the latter case the polyhedra are the duals of the polyhedra in the second case.

It remains to prove that $\mathcal{P}$ is chiral. We need to reject the possibility that there is an involutory group automorphism $\rho$ of $\Gamma$ such that $\rho(\sigma_2)=\sigma_2^{-1}$ and $\rho(\tau)=\tau$. 
But now $\Gamma=\mathrm{AGL}(1,q)$, so its automorphism group is given by $\mathrm{A\Gamma L}(1,q)$. 

Now suppose to the contrary that $\rho$ exists and $\mathcal{P}$ is regular. Then $\Gamma(\mathcal{P})$ is isomorphic to the subgroup $\langle\Gamma,\rho\rangle$ of $\mathrm{A\Gamma L}(1,q)$. We already explained earlier that $\rho$ cannot lie in $\mathrm{AGL}(1,q)$ itself, since otherwise $\Gamma(\mathcal{P})$ would lie in $\mathrm{AGL}(1,q)$. Thus $\rho$ is an involution $\gamma(g,h,\sigma)$ of $\mathrm{A\Gamma L}(1,q)$ of the form described in Lemma~\ref{invgrauts}; that is, $g\in\mathbb{F}_q^*$, $h\in\mathbb{F}_q$, $gg^{\sigma} = 1$,  $gh^\sigma +h=0$, and $\sigma\in\rm{Aut}(\mathbb{F}_q)$ is an involution. The latter condition is already a contradiction if $q=p^l$ with $l$ odd; in fact, recall that $\mathbb{F}_q^*$ admits involutory field automorphisms only if $q=p^l$ with $l$ even.

Now recall that, by assumption, either $\gcd(q-1,m)=1$ (so $a$ has order $p^{l}-1$) or $q\equiv 3\bmod 4$ and ${\gcd(q-1,m)=2}$. If $q\equiv 3\bmod 4$, we necessarily have $q=p^l$ with $l$ odd, so the latter case has already been settled. (Alternatively, we could appeal to duality.) In the former case where $a$ generates $\mathbb{F}_q^*$, we also arrive at a contradiction; in fact, from the first part of Lemma~\ref{conjlemma}, $a^{p^{l/2} +1}=1$, so the order of $a$ must be strictly smaller than $p^{l}-1=(p^{l/2}+1)(p^{l/2}-1)$. Note that our arguments only appeal to the first part of Lemma~\ref{conjlemma}.
\end{proof}
\medskip

For the two smallest prime powers $q=5$ and $q=7$, Theorem~\ref{gampropsagl} gives two familiar chiral torus maps:\ $\{4,4\}_{(1,2)}$ with automorphism group $\mathrm{AGL}(1,5)\cong C_5\rtimes C_4$ of order 20, and $\{3,6\}_{(1,2)}$ and $\{6,3\}_{(1,2)}$ with automorphism groups $\mathrm{AGL}(1,7)\cong C_7\rtimes C_6$ of order 42. 
\bigskip

There are many instances of proper subgroups $\Gamma$ where the polyhedron $\mathcal{P}$ of Theorem~\ref{gamprops} is indeed regular. The statement of Lemma~\ref{conjlemma} hints at certain possibilities. The following theorem covers all possible cases where the required group automorphism $\rho$ of $\Gamma$ is determined by an involutory element of $\mathrm{A\Gamma L}(1,q)$. However, in general there may be other ways in which group automorphisms of $\Gamma$ arise.

\begin{theorem}
\label{propsubgroup}
Let $q$ be an odd prime power, $q=p^l$ with $l$ even, and let $\Gamma\coloneqq \langle \sigma_1,\sigma_2\rangle$ where the generators $\sigma_1$ and $\sigma_2$ are defined as in \eqref{sig2} and~\eqref{sig1}, with $a\in\mathbb{F}_q^*$, $b,c\in\mathbb{F}_q$, $a\neq \pm 1$, $c\neq 2b/(1-a)$. Suppose $a^{p^{l/2} +1}=1$ (thus the order $o(a)$ of $a$ in $\mathbb{F}_q^*$ divides $p^{l/2} +1$) and let again $\mathcal{P}$ denote the polyhedron associated with $\Gamma$. 
Then, \\[.03in]
(a)\ $\mathcal{P}$ is a regular polyhedron of type $\{s,t\}$, with $s=o(-a^{-1})$ and $t=o(a)$.\\[.03in]
(b)\ $\Gamma(\mathcal{P})\cong \Gamma\rtimes C_2$ and $\Gamma$ is isomorphic to the rotation subgroup of $\Gamma(\mathcal{P})$.\\[.03in]
(c)\ More explicitly, $\Gamma(\mathcal{P})$ is isomorphic to the subgroup $\Lambda:=\langle\Gamma,\rho\rangle$ of $\mathrm{A\Gamma L}(1,q)$, where $\rho:=\gamma(g,h,\sigma)$ is the uniquely defined involutory group automorphism of $\mathrm{AGL}(1,q)$ such that $\sigma$ is the involution in $\rm{Aut}(\mathbb{F}_q)$ and $g\in\mathbb{F}_q^*$, $h\in\mathbb{F}_q$ are the unique solutions to the linear system in \eqref{linsystem} given in \eqref{systemsols} (thus $g\overline{g}= 1$, $g\overline{h}+h=0$). The distinguished generators of $\Lambda$ are given by $\rho_{0}:=\tau\rho$, $\rho_{1}:=\sigma_2\rho$, $\rho_{2}:=\rho$.
\end{theorem}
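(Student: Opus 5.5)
The plan is to construct the involution $\rho$ explicitly, check that conjugation by $\rho$ has the required effect on the generators, and then invoke the regularity criterion from Section~\ref{abpo}. By Theorem~\ref{gamprops}(a), $\Gamma$ is either the group of a chiral polyhedron or the rotation subgroup of a regular polyhedron $\mathcal{P}$ of type $\{s,t\}$, with distinguished generators $\sigma_1,\sigma_2$. So $\mathcal{P}$ is regular once we exhibit an involutory group automorphism of $\Gamma$ that inverts $\sigma_2$ and fixes $\tau$.

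\textbf{Construction of $\rho$.} Let $\sigma=\varphi^{l/2}$, which is the involution in $\mathrm{Aut}(\mathbb{F}_q)$ because $l$ is even, and write $\overline{x}=x^\sigma$. Since $a^{p^{l/2}+1}=1$, we have $\overline{a}=a^{-1}$. Let $D=2\overline{b}-\overline{c}(1-\overline{a})=\overline{2b-c(1-a)}$, which is nonzero by the condition $c\neq 2b/(1-a)$. Define $g,h$ by \eqref{systemsols}. Then $g\overline{g}=\overline{a}a\,(\overline{D}D)/(D\overline{D})=1$. We also need $g\overline{h}+h=0$, i.e.
\[
-\overline{a}\,\frac{\overline{D}}{D}\cdot\frac{b\overline{c}+ac\overline{b}}{\overline{D}}+\frac{\overline{b}c+\overline{a}b\overline{c}}{D}=0 .
\]
Using $\overline{a}a=1$, this reduces to $-\overline{a}b\overline{c}-c\overline{b}+\overline{b}c+\overline{a}b\overline{c}=0$, which holds. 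With these two identities, the computation in \eqref{conjrho2} shows $\rho^2=1$, so $\rho=\gamma(g,h,\sigma)$ is an involutory element of $\mathrm{A\Gamma L}(1,q)$. Being an element of $\mathrm{A\Gamma L}(1,q)$, it normalizes $\mathrm{AGL}(1,q)$.

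\textbf{Action on the generators.} Next reverse the argument of Lemma~\ref{conjlemma}. By \eqref{conjrho2}, $\rho\sigma_2\rho=\alpha(\overline{a},\,g\overline{a}\overline{h}+g\overline{b}+h)$. Substituting $g\overline{h}=-h$ and $\overline{a}=a^{-1}$, the constant term becomes $\overline{b}g+(1-\overline{a})h$, which equals $-\overline{a}b=-a^{-1}b$ by the first equation of \eqref{linsystem}. Hence $\rho\sigma_2\rho=\sigma_2^{-1}$. In the same way, \eqref{conjtau} together with the second equation of \eqref{linsystem} gives $\rho\tau\rho=\tau$. So conjugation by $\rho$ maps $\Gamma=\langle\sigma_2,\tau\rangle$ onto itself and is an involutory automorphism of $\Gamma$ with the required action. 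It is nontrivial, since $\sigma_2^{-1}\neq\sigma_2$ because $a\neq\pm1$. By the criterion of Section~\ref{abpo}, $\mathcal{P}$ is regular, and the type statement in (a) is Theorem~\ref{gamprops}(b). Uniqueness of $\rho$ in (c) comes from the nonsingularity of \eqref{linsystem}, as in Lemma~\ref{conjlemma}(b).

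\textbf{Structure of $\Gamma(\mathcal{P})$.} Put $\Lambda=\langle\Gamma,\rho\rangle$. Since $\rho$ normalizes $\Gamma$, we have $\Lambda=\Gamma\langle\rho\rangle$. The element $\rho$ has nontrivial field part $\sigma$, so $\rho\notin\mathrm{AGL}(1,q)\supseteq\Gamma$. Therefore $\Lambda\cong\Gamma\rtimes C_2$, which is (b). For (c), set $\rho_2=\rho$, $\rho_1=\sigma_2\rho$, $\rho_0=\tau\rho$. Then
\[
\rho_1^2=\sigma_2\rho\sigma_2\rho=\sigma_2\sigma_2^{-1}=1,\qquad \rho_0^2=\tau\rho\tau\rho=\tau^2=1 .
\]
Moreover $\rho_0\rho_1=\tau\sigma_2^{-1}=\sigma_1$, $\rho_1\rho_2=\sigma_2$, and
\[
(\rho_0\rho_2)^2=(\tau\rho\rho)^2=\tau^2=1 .
\]
These three involutions generate $\Lambda$, and their even words generate $\Gamma$. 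This is the standard passage from a rotation group with the extra involution to the full group of the regular polyhedron, so $\Gamma(\mathcal{P})\cong\Lambda$ with distinguished generators $\rho_0,\rho_1,\rho_2$.

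The main obstacle is the verification that $g\overline{g}=1$ and $g\overline{h}+h=0$, which is what makes $\rho$ an involution. Lemma~\ref{conjlemma} only deduced $g,h$ under the assumption that $\rho$ exists, so here the algebra must be checked directly from \eqref{systemsols} using $\overline{a}=a^{-1}$.
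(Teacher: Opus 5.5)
Your proposal is correct and follows the paper's proof. You take $\rho=\gamma(g,h,\sigma)$ from \eqref{systemsols} and reverse the computations of Lemma~\ref{conjlemma} to get $\rho\sigma_2\rho=\sigma_2^{-1}$ and $\rho\tau\rho=\tau$, then use $\rho\notin\mathrm{AGL}(1,q)$ to get index $2$ and the distinguished generators $\rho_0,\rho_1,\rho_2$; your explicit verification of $g\overline{g}=1$ and $g\overline{h}+h=0$, which the paper only asserts at the end of Lemma~\ref{conjlemma}, is a useful addition.
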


\begin{proof}
Recall that $\Gamma=\langle\sigma_2,\tau\rangle$ with $\tau=\sigma_1\sigma_2$. Now, reversing the steps in the proof of Lemma~\ref{conjlemma} shows that the involution $\rho$ of $\mathrm{A\Gamma L}(1,q)$ indeed conjugates the generators $\sigma_2,\tau$ of $\Gamma$ in the correct way, namely
\[\rho \sigma_2\rho = \sigma_2^{-1},\;\; \rho\tau\rho=\tau.\] 
Thus $\mathcal{P}$ is regular. Since $\rho$ does not lie in $\mathrm{AGL}(1,q)$, the subgroup $\Gamma$ of $\Lambda$ has index 2. Moreover, $\rho_0,\rho_1,\rho_2$ are involutions and $\sigma_1=\rho_0\rho_1$, $\sigma_2=\rho_1\rho_2$, $\tau = \rho_0\rho_2$. Finally,
\[\Gamma(\mathcal{P})\cong \Lambda\cong\Gamma\rtimes\langle\rho\rangle\cong\Gamma\rtimes C_2.\]
\end{proof}
\smallskip

Note that it seems difficult to establish a general criterion for when the polyhedron associated with $\Gamma$ is chiral or regular. 

We make the following additional observation. 
\smallskip

\begin{remark}
\label{general pols}
Let $q$ be an odd prime power, $q\neq 3$, let $q-1=2r=2^eq'$ with $q'$ odd, and let $n\,|\,(q-1)$, $n\neq 1,2$. Then there exists a chiral or regular polyhedron $\mathcal{P}$ whose automorphism group contains a subgroup of $\mathrm{AGL}(1,q)$ of index $1$ or $2$ isomorphic to a semidirect product of a subgroup of $\mathbb{F}_q$ by a cyclic group $C_n$, and which is of type\\[.03in]
(a)\ $\{n,n\}$, $\{n/2,n\}$, or $\{2n,n\}$, if $q\equiv 1 \bmod 4$ and $n\equiv 0 \bmod 4$, $n\equiv 2 \bmod 4$, or $n$ is odd, respectively.\\[.03in]
(b)\ $\{n/2,n\}$ or $\{2n,n\}$, if $q\equiv 3 \bmod 4$ and $n$ is even or odd, 
respectively.\\[.03in]
This can be established as follows. Define $m\coloneqq (q-1)/n$. Then also $m\,|\,(q-1)$. We use the notation of Theorem~\ref{gamprops}. In particular, $a=\gamma^m$ has order $n$, so the corresponding polyhedron has a Schl\"afli type $\{s,t\}$ with $t=n$. By Lemma~\ref{neggam}, if $q\equiv 1\bmod 4$ and thus $e\geq 2$, then $s=o(-a^{-1})$ equals $n$ if $t(m)\leq e-2$ (that is, $n\equiv 0 \bmod 4$), or $n/2$ if $t(m)=e-1$ (that is, $n\equiv 2 \bmod 4$), or $2n$ if $t(m)=e$ (that is, $n$ is odd), respectively.
Similarly, if $q\equiv 3 \bmod 4$ and thus $e=1$, then $s$ equals $n/2$ if $m$ is odd (that is, $n$ is even), or $2n$ if $m$ is even (that is, $n$ is odd), respectively.
\end{remark}
\smallskip

Our construction of polyhedra from $\mathrm{AGL}(1,q)$ involves the parameters $a,b,c$ and can lead to multiple polyhedra sharing the same type $\{s,t\}$. It would be interesting to know when polyhedra of the same type are isomorphic. 
\medskip

While the groups $\mathrm{AGL}(1,q)$ themselves give rise to chiral polyhedra, the next theorem shows that no subgroup of $\mathrm{AGL}(1,q)$ can occur as the automorphism group of a (non-degenerate) regular polyhedron or chiral or regular polytope of higher rank. Note in this context that dihedral subgroups of $\mathrm{AGL}(1,q)$, being generated by two involutions whose product is necessarily a translation, must necessarily be isomorphic to $D_p$. Thus, in rank $2$ (where every polytope is regular), only dihedral subgroups $D_p$ of $\mathrm{AGL}(1,q)$ are automorphism groups of regular polytopes, $p$-gons in this case. 

\begin{theorem}
\label{noregs}
Let $q$ be an odd prime power. \\[.03in]
(a)\ There are no regular polytopes of rank greater than or equal to 3 whose automorphism group is isomorphic to a subgroup of $\mathrm{AGL}(1,q)$.\\[.03in]
(b)\ There are no chiral polytopes of rank greater than or equal to 4 whose automorphism group is isomorphic to a subgroup of $\mathrm{AGL}(1,q)$.\\[.03in]
(c)\ If a chiral polyhedron of type $\{s,t\}$ has an automorphism group isomorphic to a subgroup of $\mathrm{AGL}(1,q)$, then $s,t\,|(q-1)$.
\end{theorem}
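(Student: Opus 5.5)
The plan is to use the structure of involutions in $\mathrm{AGL}(1,q)$ given by Lemma~\ref{aglprops}(e). The only involutions are the maps $\alpha(-1,c)$. Any two of them multiply to a translation, which is trivial or of order $p$. The other key fact is that commuting involutions must coincide: indeed
\[\alpha(-1,b)\alpha(-1,c)=\alpha(1,b-c),\]
and this is an involution only if it is trivial, i.e.\ $b=c$.

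For part (a), suppose $\Gamma(\mathcal{P})=\langle\rho_0,\ldots,\rho_{n-1}\rangle$ with $n\ge 3$ embeds in $\mathrm{AGL}(1,q)$.
\begin{itemize}
\item The generators $\rho_0$ and $\rho_2$ commute and are distinct involutions.
\item They are distinct because the intersection property gives $\langle\rho_0\rangle\cap\langle\rho_2\rangle=1$, or simply because the $\rho_i$ are distinct as automorphisms.
\item By the observation above, distinct involutions of $\mathrm{AGL}(1,q)$ cannot commute, so their product cannot be the required involution.
\end{itemize}
This contradiction settles all ranks $\ge 3$, since the relation $(\rho_0\rho_2)^2=1$ with $\rho_0\neq\rho_2$ is present in every rank $\ge 3$.

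For part (b), take a chiral polytope of rank $n\ge 4$ with generators $\sigma_1,\ldots,\sigma_{n-1}$.
\begin{itemize}
\item The elements $\tau_1:=\sigma_1\sigma_2$, $\tau_2:=\sigma_2\sigma_3$ and $\tau_{13}:=\sigma_1\sigma_2\sigma_3$ are all involutions.
\item Each of them is therefore of the form $\alpha(-1,\cdot)$.
\item Then $\sigma_1=\tau_{13}\sigma_3^{-1}\sigma_2^{-1}=\tau_{13}\tau_2^{-1}=\tau_{13}\tau_2$ is a product of two involutions, hence a translation.
\item Similarly $\sigma_3=\tau_1\tau_{13}$ is a translation.
\end{itemize}
This gives two routes to a contradiction.
\begin{itemize}
\item \emph{Route via the facet.} The facet group $\langle\sigma_1,\sigma_2\rangle$ has $\sigma_1$ a translation of order $p$ (or trivial). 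Since $\tau_1=\sigma_1\sigma_2$ is an involution $\alpha(-1,c)$, $\sigma_2=\sigma_1^{-1}\tau_1$ is also an involution. But in a chiral (or rotary nondegenerate) polytope $\sigma_2$ should have order at least $3$.
\item \emph{Route via the rotation relation.} Alternatively, use $(\sigma_1\sigma_2\sigma_3)^2=1$ together with the observation that $\sigma_2$ would have to be an involution.
\end{itemize}
The cleanest formulation is that the element $\sigma_2$ of order $\ge3$ is forced into $H$, the group generated by involutions, whose nontrivial elements have order $2$ or $p$. One then uses the fact that in $\langle \sigma_1,\sigma_2\rangle$ both generators lie in $H$ with $\sigma_1\sigma_2$ an involution. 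I expect this part to be the main obstacle. One must check carefully that a degenerate facet with $\sigma_2$ an involution is excluded, and then rule out the case where $\sigma_1,\sigma_3$ translations and $\sigma_2=\alpha(-1,\cdot)$ might still generate something polytope-like. That case would violate the intersection property $\langle\sigma_1\rangle\cap\langle\sigma_3\rangle=1$ or the requirement $o(\sigma_2)\ge 3$. The detail to watch is the derivation from $\tau_2$: it gives $\sigma_3=\sigma_2^{-1}\tau_2$. With $\sigma_2$ in $H$ this forces $\sigma_2$ itself to be an involution or a translation, and the final contradiction comes from $\sigma_2\sigma_3$ and $\sigma_1\sigma_2$ both being involutions.

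For part (c), write $\sigma_1=\alpha(a_1,b_1)$ and $\sigma_2=\alpha(a_2,b_2)$, with $\tau=\sigma_1\sigma_2=\alpha(-1,c)$. Then $a_1a_2=-1$. Neither $\sigma_i$ is a translation: otherwise the other would be an involution, contradicting $s,t\ge3$. Hence by Lemma~\ref{aglprops}(d), $s=o(\sigma_1)=o(a_1)$ and $t=o(a_2)$. Both are orders of elements of the cyclic group $\mathbb{F}_q^*$, so they divide $q-1$.
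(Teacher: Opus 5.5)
Your proposal is correct and follows essentially the paper's proof. Parts (a) and (c) are the same arguments. In (b), your route via the facet (deduce that $\sigma_1=\tau_{13}\tau_2$ is a translation, hence $\sigma_2=\sigma_1^{-1}\tau_1$ is an involution) is a slight shortcut of the paper's argument, which also makes $\sigma_3$ a translation and writes $\sigma_2$ as translation, involution, translation. That route already finishes (b), because the generators $\sigma_j$ of a chiral polytope are never involutions, so your subsequent worry about a ``main obstacle'' is unnecessary.
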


\begin{proof}
We already mentioned that $\mathrm{AGL}(1,3)$ is too small a group to occur, so we may assume that $q\neq 3$.

For part (a), it suffices to prove the non-existence for rank 3. Suppose $\mathcal{P}$ is a regular polyhedron whose automorphism group $\Gamma(\mathcal{P})=\langle\rho_0,\rho_1,\rho_2\rangle$ is a subgroup of $\mathrm{AGL}(1,q)$. Then 
all of $\rho_0$, $\rho_1$, $\rho_2$, and $\rho_0\rho_2$ are involutions and must lie in the subgroup $H$ of $\mathrm{AGL}(1, q)$. On the other hand, as a product of two different involutions from $H$, the involution $\rho_0\rho_2$ must also be a translation, which is impossible.

For part (b), suppose $n\geq 4$ and $\mathcal{P}$ is a chiral $n$-polytope whose automorphism group $\Gamma(\mathcal{P})=\langle\sigma_1,\ldots,\sigma_{n-1}\rangle$ is a subgroup of $\mathrm{AGL}(1,q)$. We show that this forces $\sigma_2$ to be an involution, which is impossible. In fact, $\tau_{12}\coloneqq \sigma_1\sigma_2$, $\tau_{23}\coloneqq \sigma_2\sigma_3$ and $\tau_{123}\coloneqq \sigma_1\sigma_2\sigma_3$ are involutions in $\Gamma(\mathcal{P})$ and thus lie in $H$. Then $\sigma_3=\tau_{12}\tau_{123}$ and $\sigma_1=\tau_{123}\tau_{23}$ must be translations in $T$ and therefore the element $\sigma_{2}=\sigma_1^{-1}\tau_{123}\sigma_3^{-1}$ of $H$, being a product of two translations and one involution, must be an involution. (Note that a similar argument also applies to the rotation subgroup $\Gamma^+(\mathcal{P})$ of a regular polytope of rank at least $4$ and reproves non-existence in this case.)

For part (c), consider the generator $\sigma_2$ (say). We know that $\sigma_{2}=\alpha(a,b)$ for some $a\in\mathbb{F}_q^*$ and $b\in\mathbb{F}_q$. Now, if $a\neq 1$, then $o(\sigma_{2})=o(a)\,|\,(q-1)$ and we are done. It remains to rule out the possibility that $a=1$. In fact, in this case $\sigma_2$ would be a translation and therefore $\sigma_{1}\coloneqq \tau_{12}\sigma_{2}^{-1}$, with $\tau_{12}\coloneqq \sigma_1\sigma_2$ as before, would be an involution in $H$, which is impossible. The proof that $o(\sigma_1)\,|\,(q-1)$ is similar.
\end{proof}
\medskip

The upshot of Theorem~\ref{noregs}(c) is that our construction above represents the only possible way in which chiral polyhedra can be derived directly from $\mathrm{AGL}(1,q)$.
\smallskip

Very similar arguments to those presented in the last proof will also work for most non-chiral 2-orbit polyhedra and polytopes, as well as most alternating semiregular polytopes derived from tail-triangle C-groups (see \cite{Hub2010,MonSch2012}). Their automorphism groups cannot be subgroups of $\mathrm{AGL}(1,q)$.

\subsection*{Acknowledgment}
We are grateful to an anonymous referee for a thoughtful initial review with very valuable suggestions for improvement of the paper.


\begin{thebibliography}{100}

\bibitem{AK1992} 
E.F.~Assmus and J.D.~Key, \textit{Designs and Their Codes}, Cambridge University Press, 1992.

\bibitem{BreCat2021}
A.~Breda d’Azevedo and D.A.~Catalano. Strong map-symmetry of $\mathrm{SL}(3,K)$ and $\mathrm{PSL}(3,K)$ for every finite field $K$, \textit{J. Algebra Appl.} {\bf 20} (2021), Paper No. 2150048.

\bibitem{BJS2011} A.~Breda D’Azevedo, G.A.~Jones and E.~Schulte, Constructions of chiral polytopes of small rank, \textit{Canadian J. Math.} {\bf 63} (2011), 1254--1283.

\bibitem{CFH2021}
M.D.E.~Conder, Y.-Q.~Feng, D.-D.~Hou, Two infinite families of chiral polytopes of type $\{4, 4, 4\}$ with solvable automorphism groups, \textit{J. Algebra} {\bf 569} (2021), 713--722.

\bibitem{CHO2024} 
M.~Conder, I.~Hubard and E.~O'Reilly-Regueiro, Construction of chiral polytopes of large rank with alternating or symmetric automorphism group, \textit{Adv.\ Math.} {\bf 452} (2024), Article ID 109819.

\bibitem{CHPO2015}
M.D.E.~Conder, I.~Hubard, E.~O’Reilly Regueiro and D.~Pellicer, Construction of chiral 4-polytopes with an alternating or symmetric group as automorphism group, \textit{J. Algebraic Combin.} {\bf 42} (2015), 225--244.

\bibitem{CHP2008} 
M.~Conder, I.~Hubard and T.~Pisanski, Constructions for chiral polytopes. J. London Math. Soc. (2) {\bf 77} (2008), 115--129.

\bibitem{CPS2008}
M.D.E.~Conder, P.~Poto\v{c}nik, J.~\v{S}ir\v{a}n. Regular hypermaps over projective linear groups, \textit{J. Aust. Math. Soc.} {\bf 85} (2008), 155--175.

\bibitem{Cox1970}
H.S.M.~Coxeter, \textit{Twisted Honeycombs}, Conference Board of the Mathematical Sciences Regional Conference Series in Mathematics, 4, American Mathematical Society, Providence, RI, 1970.

\bibitem{CoxMos1980}
H.S.M.~Coxeter and W.O.J.~Moser, \textit{Generators and Relations for Discrete Groups}, Springer-Verlag, 4th edition, 1980.

\bibitem{Cun2018}
G.~Cunningham, Tight chiral polyhedra, \textit{Combinatorica} {\bf 38} (2018), 115--142.

\bibitem{CunPel2014} 
G.~Cunningham and D.~Pellicer, Chiral extensions of chiral polytopes, \textit{Discrete Math.}
{\bf 330} (2014), 51--60.

\bibitem{Hub2010}
I.~Hubard, Two-orbit polyhedra from groups, \textit{European Journal of Combinatorics} {\bf 31} (2010), 943--960.

\bibitem{HubLee2014} 
I. Hubard and D. Leemans, Chiral polytopes and Suzuki simple groups, In: Rigidity and Symmetry (eds. R.~Connelly, A. Ivić Weiss, W.~Whiteley), Fields Institute Communications,  Vol. 70, 2014, 155--175.

\bibitem{HFL2020}  D.-D. Huo, Y.-Q. Feng  and D.~Leemans, Regular polytopes of 2-power order, \textit{Discrete Comp.\ Geom.} {\bf 64} (2020), 339--346.

\bibitem{HFL2025} 
D.-D.~Huo, Y.-Q.~Feng and D.~Leemans, Regular 3-polytopes of order $2^{n}p$, \textit{J. Group Theory}, 2025.

\bibitem{HZG2024}
D.-D.~Hou, T.-T.~Zheng and R.-R.~Guo, Four infinite families of chiral 3-polytopes of type $\{4, 8\}$ with solvable automorphism groups, \textit{Discrete Math.} {\bf 347} (2024), No. 113843.

\bibitem{Lang1993}
S.~Lang, \textit{Algebra}, 3rd Edition, Addison-Wellesley Publishing Company, 1993.

\bibitem{LeeLie2017}
D.~Leemans and M.W.~Liebeck, Chiral polyhedra and finite simple groups, \textit{Bull. Lond. Math. Soc.} {\bf 49} (2017), 581--592.

\bibitem{LMO2017}
D.~Leemans, J.~Moerenhout and E.~O’Reilly Regueiro, Projective linear groups as automorphism groups of chiral polytopes, \textit{J. Geom.} {\bf 108} (2017), 675--702.

\bibitem{LeeVand2023} D.~Leemans and A.~Vandenschrick, On chiral polytopes having a group $\textrm{PSL}(3, q)$ as automorphism group, \textit{J. Lond. Math. Soc.} {\bf 106} (2022), 85--111.

\bibitem{McMSch2002}
P.~McMullen and E.~Schulte, \textit{Abstract Regular Polytopes}, Cambridge University Press, 2002.

\bibitem{MonSch2012} 
B.~Monson and E.~Schulte, Semiregular polytopes and amalgamated C-groups, \textit{Adv.\ Math.} {\bf 229} (2012), 2767--2791.

\bibitem{MonWei1997} 
B.~Monson and A. I.~Weiss, Eisenstein integers and related C-groups, \textit{Geometriae Dedicata} {\bf 66} (1997), 99--117.

\bibitem{Pel2010} 
D.~Pellicer, A construction of higher rank chiral polytopes, \textit{Discrete Math.}
{\bf 310} (2010), 1222--1237.

\bibitem{Pel2025} 
D.~Pellicer, \textit{Abstract Chiral Polytopes}, Cambridge University Press, 2025.


\bibitem{SchWei1991}
E.~Schulte and A.I.~Weiss, Chiral polytopes, In \textit{Applied Geometry and Discrete Mathematics (The Victor Klee Festschrift)} (eds. P.~Gritzmann and B.~Sturmfels), DIMACS Series in Discrete Mathematics and Theoretical Computer Science, Vol. 4, American Mathematical Society and Association of Computing Machinery, 493--516, 1991.

\bibitem{SchWei1994} E.~Schulte and A. I.~Weiss, Chirality and projective linear groups. \textit{Discrete Math.} {\bf 13} (1994), 221--261.

\bibitem{SchWei2006}
E.~Schulte and A. I.~Weiss. Problems on polytopes, their groups, and realizations, \textit{Periodica Math. Hungarica} {\bf 53} (2006), 231--255.

\bibitem{Zha2023a} 
W.-J.~Zhang, Three infinite families of chiral 4-polytopes with symmetric automorphism groups, {\it J. Algebraic Combin.} {\bf 57} (2023),  421--438.

\bibitem{Zha2023b}
W.-J.~Zhang, More on chiral polytopes of type $\{4, 4, . . . , 4\}$ with solvable automorphism groups, \textit{J. Group Theory} {\bf 26} (2023), 1231--1265.

\end{thebibliography}
\end{document}